\long\def\@makecaption#1#2{%
 \captionstyle
 \ifx\@captype\fig@type
   \vskip\figcapgap
 \fi
 \setbox\@tempboxa\hbox{{\floatlegendstyle #1\floatcounterend}%
 \capstrut #2}%
 \ifdim \wd\@tempboxa >\hsize
   {\floatlegendstyle #1\floatcounterend}\capstrut #2\par
 \else
   \hfill\unhbox\@tempboxa\hfill\hfill%
 \fi
 \ifx\@captype\fig@type\else
   \vskip\tabcapgap
 \fi}
\spnewtheorem{counterexample}[subsection]{Counter-example}{\bf}{\rm}
\spnewtheorem{condition}[subsection]{Condition}{\bf}{\rm}
\newcommand{\cip}{\,\perp\!\!\!\perp}
 \newcommand{\cS}{{\cal S}}  
\newcommand{\E}{\mathbb{E}} 
\newcommand{\pr}{P}
\newcommand{\indo}[2]{\mbox{$#1 \,\cip\, #2$}}
\newcommand{\ind}[3]{\mbox{$#1 \, \cip\, #2 \mid #3$}}
\newcommand{\nind}[3]{\mbox{$#1 \,  \not\!\!\cip\, #2 \mid #3$}}
\newcommand{\ninda}[4]{\mbox{$#1 \not\!\!\cip_{\mbox{\scriptsize $#4$}}\, #2 \mid #3$}} 
\newcommand{\etc}{{\em etc.\/}\xspace}
\newcommand{\viz}{{\em viz.\/}\xspace}
\newcommand{\eg}{{\em e.g.\/}\xspace} \newcommand{\ie}{{\em i.e.\/}\xspace} \newcommand{\as}[1]{\mbox{\rm a.s.\ [$#1$]}}
\newcommand{\aso}{\mbox{\rm almost surely}}  \newcommand{\cd}{\,|\,}
\newcommand{\secref}[1]{\mbox{\S~\ref{sec:#1}}}
\newcommand{\figref}[1]{\mbox{Figure~\ref{fig:#1}}}
\newcommand{\lemref}[1]{\mbox{Lemma~\ref{lem:#1}}}
\newcommand{\tabref}[1]{\mbox{Table~\ref{tab:#1}}}
\newcommand{\defref}[1]{\mbox{Definition~\ref{def:#1}}}
\newcommand{\countref}[1]{\mbox{Counter-example~\ref{count:#1}}}
\newcommand{\Countref}[1]{\mbox{Counter-example~\ref{count:#1}}}
\newcommand{\itref}[1]{\mbox{\ref{it:#1}}}
\newcommand{\theoremref}[1]{\mbox{Theorem~\ref{thm:#1}}}
\newcommand{\thmref}[1]{\mbox{Theorem~\ref{thm:#1}}}
\renewcommand{\eqref}[1]{\mbox{(\ref{eq:#1})}}
\journalname{Statistics in Biosciences}
\begin{document}
  
\title{A Formal Treatment of Sequential
  Ignorability}%\thanks{Grants or other notes
% about the article that should go on the front page should be placed
% here.  General acknowledgments should be placed at the end of the
% article.}

% \subtitle{Do you have a subtitle?\\ If so, write it here}

\titlerunning{A Formal Treatment of Sequential
  Ignorability} % if too long for running head

\author{A. Philip Dawid \and Panayiota Constantinou}

\authorrunning{A. P. Dawid and P.
  Constantinou} % if too long for running head

\institute{A. Philip Dawid \at
  Statistical Laboratory, DPMMS \\
  Centre for Mathematical Sciences\\
  University of Cambridge\\
  Wilberforce Road\\
  Cambridge CB3 0WB UK\\
  Tel.: +44 (0)1223 766535 \\
  Fax: +44 (0)1223 337956\\
  \email{apd@statslab.cam.ac.uk} % \\
  \emph{Present address:} of F. Author % if needed
  \and Panayiota Constantinou \at
  Statistical Laboratory, DPMMS \\
  Centre for Mathematical Sciences\\
  University of Cambridge\\
  Wilberforce Road\\
  Cambridge CB3 0WB UK\\
  \email{pc393@cam.ac.uk} }

\date{Received: date / Accepted: date}
% The correct dates will be entered by the editor

\maketitle

\begin{abstract}
  Taking a rigorous formal approach, we consider sequential decision
  problems involving observable variables, unobservable variables, and
  action variables.  We can typically assume the property of {\em
    extended stability\/}, which allows identification (by means of
  ``$G$-computation'') of the consequence of a specified treatment
  strategy if the ``unobserved'' variables are, in fact,
  observed---but not generally otherwise.  However, under certain
  additional special conditions we can infer {\em simple stability\/}
  (or {\em sequential ignorability\/}), which supports $G$-computation
  based on the observed variables alone.  One such additional
  condition is {\em sequential randomization\/}, where the unobserved
  variables essentially behave as random noise in their effects on the
  actions.  Another is {\em sequential irrelevance\/}, where the
  unobserved variables do not influence future observed variables.  In
  the latter case, to deduce sequential ignorability in full
  generality requires additional positivity conditions.  We show here
  that these positivity conditions are not required when all variables
  are discrete.

  \keywords{Causal inference \and $G$-computation \and Influence
    diagram \and Observational study \and Sequential decision theory
    \and Stability}
  % \PACS{PACS code1 \and PACS code2 \and more} \subclass{MSC code1
  % \and MSC code2 \and more}
\end{abstract}

\section{Introduction}
\label{sec:Intro}

We are often concerned with controlling some variable of interest
through a sequence of consecutive actions.  An example in a medical
context is maintaining a critical variable, such as blood pressure,
within an appropriate risk-free range.  To achieve such control, the
doctor will administer treatments over a number of stages, taking into
account, at each stage, a record of the patient's history, that
provides him with information on the level of the critical variable,
and possibly other related measurements, as well as the patient's
reactions to the treatments applied in preceding stages.  Consider,
for instance, practices followed after events such as stroke,
pulmonary embolism or deep vein thrombosis \citep{henderson,Sterne}.
The aim of such practices is to keep the patient's prothrombin time
(international normalized ratio, INR) within a recommended range.
Such efforts are not confined to a single decision and instant
allocation of treatment, marking the end of medical care.  Rather,
they are effected over a period of time, with actions being decided
and applied at various stages within this period, based on information
available at each stage.  So the patient's INR and related factors
will be recorded throughout this period, along with previous actions
taken, and at each stage all the information so far recorded, as well,
possibly, as other, unrecorded information, will form the basis upon
which the doctor will decide on allocation of the subsequent
treatment.

A well-specified algorithm that takes as input the recorded history of
a patient at each stage and gives as output the choice of the next
treatment to be allocated constitutes a \emph{dynamic decision
  strategy}.  Such a strategy gives guidance to the doctor on how to
take into account the earlier history of the patient, including
reactions to previous treatments, in allocating the next treatment.
There can be an enormous number of such strategies, having differing
impacts on the variable of interest.  We should like to have criteria
to evaluate these strategies, and so allow us to choose the one that
is optimal for our problem \citep{Murphy}.

In this paper we develop and extend the decision-theoretic approach to
this problem described by \citet{dawiddidelez}.  A problem that
complicates the evaluation of a strategy is that the data we possess
were typically not generated by applying that strategy, but arose
instead from an observational study.  We thus seek conditions, which
we shall express in decision-theoretic terms, under which we can
identify the components we need to evaluate a strategy from such data.
When appropriate conditions are satisfied, the {\em $G$-computation
  algorithm\/} introduced by \citet{Robins1986,Robins1992} allows us
to evaluate a strategy on the basis of observational data.  Our
decision-theoretic formulation of this is closely related to the
seminal work of \citet{Robins1986, Robins1987, Robins1989,
  Robins1997}, but is, we consider, more readily interpretable.

The plan of the paper is as follows.  In \secref{description} we
detail our notation, and describe the {\em $G$-recursion algorithm\/}
for evaluating an interventional strategy.  We next discuss the
problem of identifiability, which asks when observational data can be
used to evaluate a strategy.  Distinguishing between the observational
and interventional regimes, we highlight the need for conditions that
would allow us to transfer information across regimes, and thus
support observational evaluation of an interventional strategy.

In \secref{dtapproach} we describe the decision-theoretic framework,
by means of which we can formulate such conditions formally in a
simple and comprehensible way, and so address our questions.  In
particular, we show how the language and calculus of conditional
independence supply helpful tools that we can exploit to attack the
problem of evaluating a strategy from observational data.

In \secref{ss} we introduce \emph{simple stability}, the most
straightforward condition allowing us to evaluate a strategy, by means
of $G$-recursion, from observational data.  However, in many problems
this condition is not easily defensible, so in \secref{sig} we explore
other conditions, in particular conditions we term \emph{sequential
  randomization} and \emph{sequential irrelevance}.  We investigate
when these are sufficient to induce simple stability (and therefore
observational evaluation of a strategy), and discuss their
limitations.  In particular, we show that, when all variables are
discrete, we can drop the requirement of {\em positivity\/} that is
otherwise required to deduce simple stability when sequential
irrelevance holds.  \Countref{cts}, as well as \countref{appb} and
\countref{discretesi} in the Appendix, shows the need for positivity
in more general problems.  Section~\ref{sec:summary} presents some
concluding comments.

\section{A sequential decision problem}
\label{sec:description}

We are concerned with evaluating a specified multistage procedure that
aims to affect a specific outcome variable of interest through a
sequence of interventions, each responsive to observations made thus
far.  As an example we can take the case of HIV disease.  We consider
evaluating strategies that, aiming to suppress the virus and stop
disease progression, recommend when to initiate antiretroviral therapy
for HIV patients based on their history record.  This history will
take into account the CD4 count \citep{Sterne}, as well as additional
variables relevant to the disease.

\subsection{Notation and terminology}
\label{sec:notation}

We consider two sets of variables: $\cal{L}$, a set of
\emph{observable} variables, and $\cal{A}$, a set of \emph{action}
variables.  We term the variables in $\cal{L} \cup \cal{A}$
\emph{domain variables}.  An alternating ordered sequence ${\cal I}:=
(L_1, A_1,\ldots, L_n, A_n, L_{n+1}\equiv Y)$ with $L_i\subseteq{\cal
  L}$ and $A_i\in{\cal A}$ defines an \emph{information base}, the
interpretation being that the specified variables are observed in this
time order.  We will adopt notation conventions such as $(L_1, L_2)$
for $L_1 \cup L_2$, $\overline{L}_i$ for $(L_1, \ldots,L_i)$, \etc.

The observable variables ${\cal L}$ represent initial or intermediate
symptoms, reactions, personal information, \etc, observable between
consecutive treatments, over which we have no direct control; they are
perceived as generated and revealed by Nature.  The action variables
${\cal A}$ represent the treatments, which we could either control by
external intervention, or else leave to Nature to determine.  Thus at
each stage $i$ we will have a realization of the random variable (or
set of random variables) $L_i \subseteq \cal{L}$, followed by a value
for the variable $A_i \in \cal{A}$.  After the realization of the
final $A_n \in \cal{A}$, we will observe the outcome variable $L_{n+1}
\in \cal{L}$, which we also denote by $Y$.

For any stage $i$, a configuration $h_i :=(l_1, a_1, \ldots, a_{i-1},
l_i)$ of the variables $(L_1, A_1,\ldots, A_{i-1}, L_i)$ constitutes a
\emph{partial history}.  A clearly described way of specifying, for
each action $A_i$, its value $a_i$ as a function of the partial
history $h_i$ to date defines a {\em strategy\/}: the values
$(\overline l_i, \overline a_{i-1})$ of the earlier domain variables
$(\overline L_i, \overline A_{i-1}) $ can thus be taken into account
in determining the current and subsequent actions.

In a \emph{static}, or \emph{atomic}, strategy, the sequence of
actions is predetermined, entirely unaffected by the information
provided by the $L_i$'s.  In a {\em non-randomized dynamic strategy\/}
we specify, for each stage $i$ and each partial history $h_i$, a fixed
value $a_i$ of $A_i$, that is then to be applied.  We can also
consider {\em randomized strategies\/}, where for each stage $i$ and
associated partial history $h_i$ we specify a probability distribution
for $A_i$, so allowing randomization of the decision for the next
action.  In this paper we consider general randomized strategies,
since we can regard static and non-randomized strategies as special
cases of these.  Then all the $L_i$'s and $A_i$'s have the formal
status of random variables.  We write \eg\ $\E(L_i \mid
\overline{A}_{i-1},\overline{L}_{i-1}\,;\, s)$ to denote any version
of the conditional expectation $\E(L_i \mid
\overline{A}_{i-1},\overline{L}_{i-1})$ under the joint distribution
$\pr_{s}$ generated by following strategy $s$, and ``\as{\pr_s}'' to
denote that an event has probability 1 under $\pr_{s}$.

\subsection{Evaluating a strategy}
\label{sec:eas}

Suppose we want to identify the effect of some strategy $s$ on the
outcome variable $Y$: we then need to be able to assess the overall
effect that the action variables have on the distribution of $Y$.  An
important application is where we have a loss $L(y)$ associated with
each outcome $y$ of $Y$, and want to compute the expected loss
$\E\{L(Y)\}$ under the distribution for $Y$ induced by following
strategy $s$.  We shall see in \secref{ss} below that, if we know or
can estimate the conditional distribution, under this strategy, of
each observable variable $L_i$ ($i=1,\ldots,n+1$) given the preceding
variables in the information base, then we would be able to compute
$\E\{L(Y)\}$.  Following this procedure for each contemplated
strategy, we could compare the various strategies, and so choose that
minimizing expected loss.

In order to evaluate a particular strategy of interest, we need to be
able to mimic the experimental settings that would give us the data we
need to estimate the probabilistic structure of the domain variables.
Thus suppose that we wish to evaluate a specified non-randomized
strategy for a certain patient $P$, and consider obtaining data under
two different scenarios.

The first scenario corresponds to precisely the strategy that we wish
to evaluate: that is, the doctor knows the prespecified plan defined
by the strategy, and at each stage $i$, taking into account the
partial history $h_i$, he allocates to patient $P$ the treatment that
the strategy recommends.  The expected loss $\E\{L(Y)\}$ computed
under the distribution of $Y$ generated by following this strategy is
exactly what we need to evaluate it.

Now consider a second scenario.  Patient $P$ does not take part in the
experiment described above, but it so happens he has received exactly
the same sequence of treatments that would be prescribed by that
strategy.  However, the doctor did not decide on the treatments using
the strategy, but based on a combination of criteria, that might have
involved variables beyond the domain variables $\cal{L} \cup \cal{A}$.
For example, the doctor might have taken into account, at each stage,
possible allergies or personal preferences for certain treatments of
patient $P$, variables that the strategy did not encompass.

Because these extra variables are not recorded in the data, the
analyst does not know them.  Superficially, both scenarios appear to
be the same, since the variables recorded in each scenario are the
same.  However, without further assumptions there is no reason to
believe that they have arisen from the same distribution.

We call the regime described in the first scenario above an
\emph{interventional regime}, to reflect the fact that the doctor was
intervening in a specified fashion (which we assume known to the
analyst), according to a given strategy for allocating treatment.  We
call the regime described in the second scenario an
\emph{observational regime}, reflecting the fact that the analyst has
just been observing the sequence of domain variables, but does not
know just how the doctor has been allocating treatments.

Data actually generated under the interventional regime would provide
exactly the information required to evaluate the strategy.  However,
typically the data available will not have been generated this
way---and in any case there are so many possible strategies to
consider that it would not be humanly possible to obtain such
experimental data for all of them.  Instead, the analyst may have
observed how patients (and doctors) respond, in a single, purely
observational, regime.  Direct use of such observational data, as if
generated by intervention, though tempting, can be very misleading.
For example, suppose the analyst wants to estimate, at each stage $i$,
the conditional distribution of $L_i$ given $(\overline L_{i-1},
\overline A_{i-1})$ in the interventional regime (which he has not
observed), using data from the observational regime (which he has).
Since all the variables in this conditional distribution have been
recorded in the observational regime, he might instead estimate (as he
can) the conditional distribution of $L_i$ given $(\overline L_{i-1},
\overline A_{i-1})$ in the observational regime, and consider this as
a proxy for its interventional counterpart.  However, since the doctor
may have been taking account of other variables, that the analyst has
not recorded and so can not adjust for, this estimate will typically
be biased, often seriously so.  One of the main aims of this paper is
to consider conditions under which the bias due to such potential
confounding disappears.

For simplicity, we assume that all the domain variables under
consideration can be observed for every patient.  However, the context
in which we observe these variables will determine if and how we can
use the information we collect.  The decision-theoretic approach we
describe below takes into account the different circumstances of the
different regimes by introducing a parameter to identify which regime
is under consideration at any point.  In order to tackle issues such
as the potential for bias introduced by making computations under a
regime distinct from that we are interested in evaluating, we will
need to make assumptions relating the probabilistic behaviours under
the differing regimes.  Armed with such understanding of the way the
regimes interconnect, we can then investigate whether, and if so how,
we can transfer information from one regime to another.

\subsection{Consequence of a strategy}

We seek to calculate the expectation $\E\{k(Y)\,;\, {s}\}$ (always
assumed to exist) of some given function $k(\cdot)$ of $Y$ in a
particular interventional regime $s$; for example, $k(\cdot)$ could be
a loss function, $k(y) \equiv L(y)$, associated with the outcome of
$Y$.  We shall use the term {\em consequence\/} of $s$ to denote the
expectation $\E\{k(Y)\,;\, {s}\}$ of $k(Y)$ under the contemplated
interventional regime $s$.

Assuming $(L_1,A_1, \ldots, L_N, A_N, Y)$ has a joint density in
interventional regime $s$, we can factorize it as:
\begin{equation}
  \label{eq:factor0}
  p(y, \overline{l}, \overline a\,;\, {s}) = \left\{\prod_{i=1}^{n+1}
    p(l_i \mid \overline{l}_{i-1},  \overline{a}_{i-1}\,;\, {s})\right\} \times
  \left\{\prod_{i=1}^n p(a_i \mid \overline{l}_{i},
    \overline{a}_{i-1}\,;\, {s})\right\}
\end{equation}
with $l_{n+1} \equiv y$.

\subsubsection{\texorpdfstring{$G$}{TEXT}-recursion}
\label{sec:gr}

If we knew all the terms on the right-hand side of \eqref{factor0}, we
could in principle compute the joint density for $(Y, \overline L,
\overline A)$ under strategy $s$, hence, by marginalization, the
density of $Y$, and finally the desired consequence $\E\{k(Y);s\}$.
However, a more efficient way to compute this is by means of the {\em
  $G$-computation\/} formula introduced by \citet{Robins1986}.  Here
we describe the recursive formulation of this formula, {\em
  $G$-recursion\/}, generalising the argument in the discrete case
presented by \citet{dawiddidelez}.

Let $h$ denote a partial history of the form $(\overline l_i,
\overline a_{i-1})$ or $(\overline l_i, \overline a_i)$ ($0 \leq i
\leq n+1)$.  We denote the set of all partial histories by ${\cal H}$.
Fixing a regime $s \in \cS$, define a function $f$ on ${\cal H}$ by:
\begin{equation}
  \label{eq:fi0}
  f(h) := \E\{k(Y) \mid h\,;\, s\}.
\end{equation}

\paragraph{Note:} When we are dealing with non-discrete distributions
(and also in the discrete case when there are non-trivial events of
$\pr_s$-probability $0$), the conditional expectation on the
right-hand side of \eqref{fi0} will not be uniquely defined, but can
be altered on a set of histories which has $\pr_s$-probability $0$.
Thus we are in fact requiring, for each $i$:
\begin{equation}
  \label{eq:fi1}
  f(\overline L_i,  \overline A_{i}) := \E\{k(Y) \mid \,\overline L_i,  \overline A_{i};\, s\}\quad \as{\pr_s}
\end{equation}
(and similarly when the argument is $(\overline L_i, \overline
A_{i-1})$).  And we allow the left-hand side of \eqref{fi0} to be {\em
  any\/} selected version of the conditional expectation on the
right-hand side.

For any versions of these conditional expectations, applying the law
of repeated expectation yields:
\begin{eqnarray}
  \label{eq:mainrecura}
  f(\overline L_i, \overline A_{i-1}) &=&  \E\left\{f(\overline L_i, \overline A_i)\, \mid \, \overline L_{i}, \overline A_{i-1}\,;\, s)\right\}\quad \as{\pr_s}\\
  \label{eq:mainrecurl}
  f(\overline L_{i-1}, \overline A_{i-1})& =& \E\left\{f(\overline L_i, \overline A_{i-1}\,  \mid\,  \overline L_{i-1}, \overline A_{i-1}\,;\,s)\right\} \as{\pr_s}.   
\end{eqnarray}
For $h$ a full history $(\overline l_n, \overline a_n, y)$, we have
$f(h) = k(y)$.  Using these starting values, by successively
implementing \eqref{mainrecura} and \eqref{mainrecurl} in turn,
starting with \eqref{mainrecurl} for $i = n+1$ and ending with
\eqref{mainrecurl} for $i=1$, we step down through ever shorter
histories until we have computed $f(\emptyset) = \E\{k(Y)\,;\,s\}$,
the consequence of regime~$s$.  Note that this equality is only
guaranteed to hold almost surely, but since both sides are constants
they must be the same constant.  In particular, it can not matter
which version of the conditional expectations we have chosen in
conducting the above recursion: in all cases we will exit with the
desired consequence $ \E\{k(Y)\,;\,s\}$.

\subsection{Using observational data}
\label{sec:obsdata}

In order to compute $\E\{k(Y)\,;\,s\}$, whether directly from
\eqref{factor0} or using $G$-recursion, \eqref{mainrecura} and
\eqref{mainrecurl}, we need (versions of) the following conditional
distributions under $\pr_s$:

\begin{enumerate}[label=(\roman{enumi})]
\item\label{it:ai} $A_i \,\mid\, \overline L_{i}, \overline A_{i-1}$,
  for $i = 1, \ldots,n$.
\item\label{it:li} $L_i \,\mid\, \overline L_{i-1}, \overline
  A_{i-1}$, for $i = 1, \ldots, n+1$.
  % \item\label{it:y} $p(y \mid \overline l_n, \overline a_n\,;\, s)$.
\end{enumerate}

Since $s$ is an interventional regime, corresponding to a well-defined
(possibly randomized) treatment strategy, the conditional
distributions in \itref{ai} are fully specified by the treatment
protocol.  So we only need to get a handle on each term of the form
\itref{li}.  However, since we have not implemented the strategy $s$,
we do not have data directly relevant to this.  Instead, we might be
tempted to use its observational counterpart, \ie\ a version of the
conditional distribution of $l_i\, \mid \overline L_{i-1}, \overline
A_{i-1}$ in the observational regime $\pr_o$, which is (in principle)
estimable from observational data.  This will generally be a dangerous
ploy, since we are dealing with two quite distinct regimes, with
strong possibilities for confounding and other biases in the
observational regime; however, it can be justifiable if we can impose
suitable extra conditions, relating the probabilistic behaviours of
the different regimes.  We therefore now turn to a description of a
general ``decision-theoretic'' framework that is useful for expressing
and manipulating such conditions.

\section{The decision-theoretic approach}
\label{sec:dtapproach}
\label{sec:dt}
In the decision-theoretic approach to causal inference, we proceed by
making suitable assumptions relating the probabilistic behaviours of
stochastic variables across a variety of different regimes.  These
could relate to different locations, time-periods, or, in this paper,
contexts (observational/interventional regimes) in which observations
can be made.  We denote the set of all regimes under consideration by
$\cS$.  We introduce a non-stochastic variable $\sigma$, the
\emph{regime indicator}, taking values in $\cS$, to index these
regimes and their associated probability distributions.  Thus $\sigma$
has the logical status of a parameter, rather than a random variable:
it specifies which (known or unknown) joint distribution is operating
over the domain variables ${\cal L}\cup{\cal A}$.  Any probabilistic
statement about the domain variables must, explicitly or implicitly,
be conditional on some specified value $s\in{\cal S}$ for $\sigma$.

We focus here on the case that we want to make inference about one or
more interventional regimes on the basis of data generated under an
observational regime.  So we take $\cS=\{o\} \cup \cS^*$, where $o$ is
the observational regime under which data have been gathered, and
$\cS^*$ is the collection of contemplated interventional strategies
with respect to a given information base $(L_1, A_1,\ldots, L_N, A_N,
Y)$.

\subsection{Conditional independence}
\label{sec:ci}

In order to address the problem of making inference from observational
data we need to assume (and justify) some relationships between the
probabilistic behaviours of the variables in the differing regimes,
interventional and observational.  These assumptions will typically
relate certain conditional distributions across different regimes.
The notation and calculus of {\em conditional independence\/} (CI)
turn out to be well-suited to express and manipulate such assumptions.

\subsubsection{Conditional independence for stochastic variables}
\label{sec:stocci}
Let $X, Y, Z, \ldots$ be random variables defined on the same
probability space $(\Omega,{\cal A},P)$.  We write $\ind X Y Z\,\,\,
[P]$, or just $\ind X Y Z$ when $P$ is understood, to denote that {\em
  $X$ is independent of $Y$ given $Z$\/} under $P$: this can be
interpreted as requiring that the conditional distribution, under $P$,
of $X$, given $Y=y$ and $Z=z$, depends only on $y$ and not further on
the value $z$ of $Z$.  More formally, we require that, for any bounded
real measurable function $h(X)$, there exists a measurable function
$w(Z)$ such that
\begin{equation}
  \label{eq:stocci}
  \E\{h(X) \cd Y, Z\} = w(Z)\quad\as{P}.
\end{equation} 

Stochastic CI so defined has various general properties, of which the
most important are the following---which can indeed be used as axioms
of an independent ``calculus of CI''
\citep{Dawid1979,Dawid2001i,pearl1988}.

\begin{theorem}
  \label{thm:axioms}
  \quad
  \begin{enumerate}[label=(\roman{enumi})]
  \item[P1](Symmetry) \ind {X} {Y} {Z} $\Rightarrow$ \ind {Y} {X} {Z}
  \item[P2]\ind {X} {Y} {X}
  \item[P3](Decomposition) \ind {X} {Y} {Z}, $W \preceq Y$
    $\Rightarrow$ \ind {X} {W} {Z}
  \item[P4](Weak Union) \ind {X} {Y} {Z}, $W \preceq Y$ $\Rightarrow$
    \ind {X} {Y} {(W,Z)}
  \item[P5](Contraction) \ind {X} {Y} {Z} and \ind {X} {W} {(Y,Z)}
    $\Rightarrow$ \ind {X} {(Y,W)} {Z}
  \end{enumerate}
\end{theorem}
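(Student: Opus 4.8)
The plan is to obtain P2--P5 directly from the defining condition \eqref{stocci}, using nothing beyond the tower property of conditional expectation and elementary measurability, and to reserve the real work for the symmetry axiom P1, which I would derive only after first recasting \eqref{stocci} in a manifestly symmetric ``factorization'' form. Throughout, I read \ind X Y Z as the assertion that for every bounded measurable $h(X)$ the conditional expectation $\E\{h(X)\cd Y,Z\}$ admits a version that is a function $w(Z)$ of $Z$ alone, equivalently $\E\{h(X)\cd Y,Z\}=\E\{h(X)\cd Z\}$ \as{P}.

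Axiom P2 is immediate: $h(X)$ is already $\sigma(X,Y)$-measurable, so $\E\{h(X)\cd Y,X\}=h(X)$ \as{P} and one may take $w(\cdot):=h(\cdot)$. For P3 and P4 I would use that $W\preceq Y$ forces $\sigma(W,Z)\subseteq\sigma(Y,Z)$ and $\sigma(Y,W,Z)=\sigma(Y,Z)$: given \ind X Y Z, so that $\E\{h(X)\cd Y,Z\}=w(Z)$ \as{P}, adjoining $W$ to the conditioning set changes nothing, which yields P4 since $w(Z)$ is trivially a function of $(W,Z)$; for P3 the tower property across $\sigma(W,Z)\subseteq\sigma(Y,Z)$ gives $\E\{h(X)\cd W,Z\}=\E\{w(Z)\cd W,Z\}=w(Z)$ \as{P}. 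For P5 I would write, from \ind X W {(Y,Z)}, that $\E\{h(X)\cd Y,W,Z\}=v(Y,Z)$ \as{P} for some measurable $v$, and, from \ind X Y Z, that $\E\{h(X)\cd Y,Z\}=w(Z)$ \as{P}; conditioning the first identity on $(Y,Z)$ and using the tower property turns its left-hand side into $\E\{h(X)\cd Y,Z\}=w(Z)$ and its right-hand side into $v(Y,Z)$, so $v(Y,Z)=w(Z)$ \as{P}, whence $\E\{h(X)\cd Y,W,Z\}=w(Z)$ \as{P}, which is \ind X {(Y,W)} Z.

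The one step that needs care is P1, the difficulty being that \eqref{stocci} privileges $X$ and so hides any symmetry. Here the plan is to show that \ind X Y Z is equivalent to the condition that, for all bounded measurable $h(X)$ and $g(Y)$,
\begin{equation*}
  \E\{h(X)\,g(Y)\cd Z\}=\E\{h(X)\cd Z\}\;\E\{g(Y)\cd Z\}\quad\as{P},
\end{equation*}
a condition symmetric in $X$ and $Y$. The forward implication is a short computation: condition $h(X)g(Y)$ on $(Y,Z)$, pull out $g(Y)$, replace $\E\{h(X)\cd Y,Z\}$ by $w(Z)$ via \eqref{stocci}, and note $w(Z)=\E\{h(X)\cd Z\}$ by the tower property. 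For the converse I would verify that $\E\{h(X)\cd Z\}$, which is $\sigma(Y,Z)$-measurable, satisfies the defining averaging property of $\E\{h(X)\cd Y,Z\}$ when integrated against product test functions $g(Y)\,k(Z)$; since finite linear combinations of such products form an algebra generating $\sigma(Y,Z)$, a monotone-class (Dynkin) argument promotes this to all bounded $\sigma(Y,Z)$-measurable test functions, which is exactly \eqref{stocci}. With the symmetric criterion established, \ind X Y Z $\Rightarrow$ \ind Y X Z is immediate. I expect this last measure-theoretic promotion (from product test functions to arbitrary bounded measurable ones) to be the only genuine obstacle; everything else is routine manipulation of conditional expectations.
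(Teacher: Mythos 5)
Your proof is correct: P2--P5 follow exactly as you describe from the tower property and the inclusion $\sigma(W,Z)\subseteq\sigma(Y,Z)$ when $W\preceq Y$, and your treatment of P1 via the symmetric factorization criterion $\E\{h(X)g(Y)\cd Z\}=\E\{h(X)\cd Z\}\,\E\{g(Y)\cd Z\}$ \as{P}, with a functional monotone-class argument to pass from product test functions $g(Y)k(Z)$ to general bounded $\sigma(Y,Z)$-measurable ones, is the standard route. The paper itself gives no proof of \theoremref{axioms}, merely citing \citet{nayia:thesis}, and your argument is essentially the one found there, so there is nothing to correct.
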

(Here $W \preceq Y$ is used to denote that $W =f(Y)$ for some
measurable function $f$.)

These properties can be shown to hold universally for random variables
on a common probability space \citep{nayia:thesis}.%\todo{cite Nayia's
% thesis}

\subsubsection{Extended conditional independence}
\label{sec:eci}

We can generalize the property $\ind X Y Z$ by allowing either or both
of $Y, Z$ to be or contain non-stochastic elements, such as parameters
or regime indicators \citep{Dawid1979, Dawid1980, Dawid1998}: in this
case we talk of {\em extended conditional independence\/}.  Thus let
$\sigma$ denote the non-stochastic regime indicator.  Informally, we
interpret $\ind X \sigma Z$ as saying that the conditional
distribution of $X$, given $Z=z$, under regime $\sigma=s$, depends
only on $z$ and not further on the value $s$ of $\sigma$; that is to
say, the conditional distribution of $X$ given $Z$ is the same in all
regimes.  Note that this is exactly the form of ``causal assumption'',
allowing transfer of probabilistic information across regimes, that we
might wish to apply.

More formally, let $\{P_s: s \in{\cal S}\}$ be a family of
distributions, and $X$, $Y$, $Z$,\ldots random variables, on a measure
space $(\Omega, {\cal A})$.  We introduce the non-stochastic regime
indicator variable $\sigma$ taking values in ${\cal S}$, and interpret
conditioning on $\sigma=s$ to mean that we are computing under
distribution $P_s$.

\begin{definition}
  \label{def:nstoc3}
  We say that {\itshape $X$ is (conditionally) independent of $Y$
    given $(Z,\sigma)$} and write \ind {X}{Y}{(Z,\sigma)}, if for any
  bounded real measurable function $h(X)$, there exists a function
  $w(\sigma,Z)$, measurable in $Z$, such that, for all $s\in{\cal S}$,
  \begin{equation*}
    \E\{h(X)\mid Y,Z\,;\,s\}=w(s,Z) \quad \as{P_{s}}.
  \end{equation*} 
\end{definition}

\begin{definition}
  \label{def:nstoc1}
  We say that \emph{$X$ is (conditionally) independent of $(Y,\sigma)$
    given $Z$}, and write $\ind {X} {(Y,\sigma)} {Z}$, if for any
  bounded real measurable function $h(X)$, there exists a measurable
  function $w(Z)$ such that, for all $s \in {\cal S}$,
  \begin{equation}
    \label{eq:nonstocci}
    \E\{h(X)\mid Y, Z\,;\,s\}=w(Z) \quad \as{P_s}.
  \end{equation}
\end{definition}

\begin{remark}
  \quad\\[-3ex]
  \begin{enumerate}
  \item Note the similarity of \eqref{nonstocci} to \eqref{stocci}.
    In particular the function $w(Z)$ must not depend on the regime
    $s\in{\cal S}$ operating.
  \item When $X$, $Y$ and $Z$ are discrete random variables, $\ind X
    {(Y,\sigma)} Z$ if and only if there exists a function $w(X,Z)$
    such that, for any $s \in {\cal S}$,
    \begin{equation*}
      \pr(X=x\cd Y=y, Z=z\,;\,s)=w(x,z)
    \end{equation*} 
    whenever $\pr(Y=y, Z=z\,;\,s)>0$.
  \item For each $s\in{\cal S}$, the equality in \eqref{nonstocci} is
    permitted to fail on a set $A_s$, which may vary with $s$, that
    has probability $0$ under $P_s$.
  \item The requirement of \eqref{nonstocci} is that there exist a
    single function $w(Z)$ that can serve as the conditional
    expectation of $h(X)$ given $(Y,Z)$ in every distribution $P_s$;
    but this does not imply that any version of this conditional
    expectation under one value of $s$ will serve for all values of
    $s$: see \countref{appb} for a counter-example, and
    \citet{apd:misl} for cases where a lack of understanding of
    similar problems associated with null events has led to serious
    errors.  However we can sometimes escape this problem by imposing
    an additional {positivity} condition: see \secref{positivity}
    below.
  \end{enumerate}
\end{remark}

\subsubsection{Connexions}
\label{sec:connex}

In this section we impose the additional condition that the set ${\cal
  S}$ of possible regimes be finite or countable, and endow it with
the $\sigma$-field ${\cal F}$ of all its subsets.

We can construct the product measure space $(\Omega^*,{\cal A}^*) :=
(\Omega \times {\cal S}, {\cal A}\otimes{\cal F})$, and regard all the
stochastic variables $X, Y, Z, \ldots$ as defined on $(\Omega^*,{\cal
  A}^*)$; moreover $\sigma$ can also be considered as a random
variable on $(\Omega^*,{\cal A}^*)$.

Let $\Pi$ be a probability measure on ${\cal S}$, arbitrary subject
only to giving positive probability $\pi(s)>0$ to each point
$s\in{\cal S}$; and define, for any $A^* \in {\cal A}^*$:
\begin{equation}
  \label{eq:p*}
  P^*(A) = \sum_{s\in{\cal S}} \pi(s) P_s(A_s)
\end{equation}
where $A_s =\{\omega\in\Omega: (\omega,s)\in A\}$.  Under $P^*$ the
marginal distribution of $\sigma$ is $\Pi$, while the conditional
distribution over $\Omega$, given $\sigma = s$, is $P_s$.  It is then
not hard to show \citep{nayia:thesis} that $\ind X Y {(Z,\sigma)}$ in
the extended sense of \defref{nstoc3} if and only if the purely
stochastic interpretation of the same expression holds under $P^*$;
and similarly for \defref{nstoc1}.  It follows that, for the
interpretations of extended conditional independence given in
\secref{eci}, we can continue to apply all the properties P1--P5 of
\thmref{axioms}.  Any argument so constructed, in which all the
premisses and conclusions are so interpretable, will be valid---even
when some of the intermediate steps are not so interpretable (\eg,
they could have the form $\ind \sigma X Y$).

For the purposes of this paper we will only ever need to compare two
regimes at a time: the observational regime $o$ and one particular
interventional regime $s$ of interest.  Then the properties P1--P5 of
conditional independence can always be applied, and equip us with a
powerful machinery to pursue identification of interventional
quantities from observational data.
 
\subsubsection{Graphical representations}
\label{sec:graphrep}

Graphical models in the form of {\em influence diagrams\/} (IDs) can
sometimes be used to represent collections of conditional independence
properties amongst the variables (both stochastic and non-stochastic)
in a problem \citep{lauritzen, Dawid2002, cowell}.  We can then use
graphical techniques (in particular, the {\em $d$-separation\/}, or
the equivalent {\em moralization\/}, criterion) to derive, in a visual
and transparent way, implied conditional independence properties that
follow from our assumptions.  However, a graphical representation is
not always possible and never essential: all that can be achieved
through the graph-theoretic properties of IDs, and more, can be
achieved using the calculus of conditional independence (properties
P1--P5).

\section{Simple stability}
\label{sec:ss}

We now use CI to express and explore some conditions that will allow
us to perform $G$-recursion for the strategy of interest on the basis
of observational data.

Consider first the conditional distribution \itref{ai} of $A_i \mid
\overline L_{i}, \overline A_{i-1}\,;\, s$ as needed for
\eqref{mainrecura}.  This term requires knowledge of the mechanism
that allocates the treatment at stage $i$ in the light of the
preceding variables in the information base.  We assume that, for an
interventional regime $s \in \cS^*$, this distribution (degenerate for
a non-randomized strategy) will be known {\em a priori\/} to the
analyst, as it will be encoded in the strategy.  In such a case we
call $s \in \cS^*$ a {\em control strategy\/} (with respect to the
information base ${\cal I} = (L_1, A_1,\ldots, L_N, A_N, Y)$).

Next we consider how we might gain knowledge of the conditional
distribution \itref{li} of $L_i \mid \overline L_{i-1}, \overline
A_{i-1}\,;\, s$, as required for \eqref{mainrecurl}.  This
distribution is unknown, and we need to explore conditions that will
enable us to identify it from observational data.  As different
distributions for the random variables in the information base apply
in the different regimes, the distribution of $L_i$ given $(\overline
L_{i-1}, \overline A_{i-1})$ will typically depend on the regime
operating.

\begin{definition}
  \label{def:simplestability}
  We say that the problem exhibits {\em simple
    stability\/}\footnote{This definition is slightly weaker than that
    of \citet{dawiddidelez}, as we are only requiring a common version
    of the corresponding conditional expectations between each single
    control strategy and the observational regime.  We do not require
    that there exist one function that can serve as common version
    across all regimes simultaneously.} with respect to the
  information base ${\cal I} = (L_1,A_1, \ldots, L_n,A_n, Y)$ if, for
  each $s\in{\cal S}^*$, with $\sigma$ denoting the non-random regime
  indicator taking values in $\{o,s\}$:
  \begin{equation}
    \label{eq:st}
    \ind {L_i} {\sigma} {(\overline L_{i-1}, \overline A_{i-1})} \quad (i =1, \ldots, n+1).
  \end{equation}
\end{definition}

Formally, simple stability requires that, for any bounded measurable
function $f(L_i)$, there exist a single random variable $W =
w(\overline L_{i-1}, \overline A_{i-1})$ that serves as a version of
each of the conditional expectations $\E\{f(L_i)\mid (\overline
L_{i-1}, \overline A_{i-1})\,;\,o\}$ and $\E\{f(L_i)\mid (\overline
L_{i-1}, \overline A_{i-1})\,;\,s\}$.  This property then extends to
conditional expectations of functions of the form $f(\overline L_i,
\overline A_{i-1})$.  In particular, this apparently\footnote{but see
  \secref{positivity} below} supports identification of the right-hand
side of \eqref{mainrecurl} with its observational counterpart, so
allowing observational estimation estimation of this expression.

Simple stability is a very strong assumption, and will be tenable only
in very special cases.  It will be satisfied if, in the observational
regime, the action variables are physically sequentially randomized:
then all unobserved potential confounding factors will, on average, be
balanced between the treatment groups.  Alternatively, we might accept
simple stability if, in the observational regime, the allocation of
treatment is decided taking into account only the domain variables in
the information base and nothing more: for example, if we are
observing a doctor whose treatment decisions are based only on the
domain variables we are recording, and no additional unrecorded
information.

The ID describing simple stability \eqref{st} for $i = 1, 2, 3$ is
shown in \figref{simpdiag}.  The specific property \eqref{st} is
represented by the absence of arrows from $\sigma$ to $L_1$, $L_2$,
and $L_3 \equiv Y$.

\begin{figure}[h]
  \begin{center}
    \resizebox{2.8in}{!}{\includegraphics{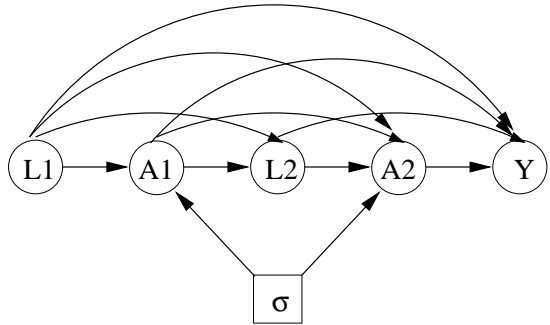}}
    \caption{Stability}
    \label{fig:simpdiag}
  \end{center}
\end{figure}

\subsection{Positivity}
\label{sec:positivity}

We have indicated that simple stability might allow us to identify the
consequence of a control strategy $s$ on the basis of data from the
observational regime $o$.  However, while this condition ensures the
existence of a common version of the relevant conditional expectation
valid for both regimes, deriving this function from the observational
regime alone might be problematic, because versions of the same
conditional expectation can differ on events of probability $0$, and
we have not ruled out that an event having probability $0$ in one
regime might have positive probability in another.  Thus we can only
obtain the desired function from the observational regime on a set
that has probability 1 in the observational regime; and this might not
have probability 1 in the intervententional regime---see
\countref{appb} in the Appendix for a simple example of this.

To evade this problem, we can impose a condition requiring an event to
have zero probability in the interventional regime whenever it has
zero probability in the observational regime:

\begin{definition}
  \label{def:positivity}
  We say the problem exhibits {\em positivity\/} or \emph{absolute
    continuity} if, for any interventional regime $s \in \cS^*$, the
  joint distribution of $(\overline{L_n}, \overline{A_n}, Y)$ under
  $\pr_s$ is absolutely continuous with respect to that under $\pr_o$,
  \ie:
  \begin{equation}
    \label{eq:pos}
    \pr_{s}(E) > 0 \Rightarrow \pr_{o}(E)> 0
  \end{equation}
  for any event $E$ defined in terms of $(\overline{L_n},
  \overline{A_n}, Y)$.
\end{definition}

Suppose we have both simple stability and positivity, and consider a
bounded function $h(L_i)$.  Let $W = w(\overline L_{i-1}, \overline
A_{i-1})$ be any variable that serves both as a version of
$\E\{h(L_i)\mid \overline L_{i-1}, \overline A_{i-1}\,;\,o\}$ and as a
version of $\E\{h(L_i)\mid \overline L_{i-1}, \overline
A_{i-1}\,;\,s\}$; such a variable is guaranteed to exist by
\eqref{st}.  Let $V = v(\overline L_{i-1}, \overline A_{i-1})$ be any
version of $\E\{h(L_i)\mid \overline L_{i-1}, \overline
A_{i-1}\,;\,o\}$.  Since $W$ too is a version of $\E\{h(L_i)\mid
\overline L_{i-1}, \overline A_{i-1}\,;\,o\}$, $V=W, \as{\pr_o}$.
Hence, by \eqref{pos}, $V=W, \as{\pr_s}$.  But since $W$ is a version
of $\E\{h(L_i)\mid \overline L_{i-1}, \overline A_{i-1}\,;\,s\}$, so
too must be $V$.  So we have shown that any version of a conditional
expectation calculated under $\pr_o$ will also serve this purpose
under $\pr_s$.  In particular, when effecting the $G$-computation
algorithm of \secref{gr}, in \eqref{mainrecurl} we are fully justified
in replacing the conditional expectation under $\pr_s$ by (any version
of) its counterpart under $\pr_o$---which we can in principle estimate
from observational data.

\subsubsection{Difficulties with continuous actions}
\label{sec:probcont}

When all variables are discrete, positivity will hold if and only if
every partial history that can occur with positive probability in the
interventional regime also has a positive probability in the
observational regime.  In particular, this will hold for every
interventional regime if every possible partial history can occur with
positive probability in the observational regime.

Even in this case we might well need vast quantities of observational
data to get good estimates of all the probabilities needed for
substitution into the $G$-recursion algorithm---that is the reason for
our qualification ``in principle'' at the end of \secref{positivity}.
In practice, even under positivity we would generally need to impose
some smoothness or modelling assumptions to get reasonable estimates
of the required observational distributions.  However we do not
explore these issues here, merely noting that, given enough data to
estimate these observational distributions, positivity allows us to
transfer them to the interventional regime.

When however we are dealing with continuous action variables---as, for
example, the dose of a medication---the positivity condition may
become totally unreasonable.  For a very simple example, consider a
single continuous action variable $A$ and response variable $Y$.  We
might want to transfer the conditional expectation $\E(Y\,|\,A)$ from
the observational regime $o$, in which $A$ arises from a continuous
distribution, to an interventional regime $s$, in which it is set to a
fixed value, $A=a_0$.  However, if we take any version of
$\E(Y\,|\,A;o)$ and change it, to anything we want, at the single
point $A=a_0$, we will still have a version of $\E(Y\,|\,A;o)$.  So we
are unable to identify the desired $\E(Y\,|\,A;s)$ This is due to the
failure of positivity, since the $1$-point interventional distribution
of $A$ is not absolutely continuous with respect to the continuous
observational distribution of $A$.  Positivity here would require that
there be a positive probability of observing the exact value $a_0$ in
the observational regime.  But it would not generally be reasonable to
impose such a condition, and quite impossible to do so for every value
$a_0$, that we might be potentially interested in setting for $A$.

In such a case we might make progress by imposing further structure,
such as a model for $\E(Y\,|\,A;o)$ that is a continuous function of
$A$, so identifying a preferred version of this.  Here however we
shall avoid such problems by only considering problems in which all
action variables are discrete.  Then we shall have positivity whenever
every action sequence $\overline a$ having positive interventional
probability also has positive observational probability, and the
(uniquely defined) conditional interventional distribution of all the
non-action variables, given $\overline A = \overline a$, is absolutely
continuous with respect to its observational counterpart.  This will
typically not be an unreasonable requirement.  We note that our set-up
is still more general than the usual formulations of $G$-recursion,
which explicitly or implicitly assume that all variables are discrete.

\section{Sequential ignorability}
\label{sec:sig}

As we have alluded, simple stability will often not be a compelling
assumption, for example because of the suspected presence of
unmeasured confounding variables, and we might not be willing to
accept it without further justification.  Here we consider conditions
that might seem more acceptable, and investigate when these will,
after all, imply simple stability---thus supporting the application of
$G$-recursion.
 
\subsection{Extended stability and extended positivity}
\label{sec:es}
 
Let $\mathcal{U}$ denote a set of variables that, while they might
potentially influence actions taken under the observational regime,
are not available to the decision maker, and so are not included in
his information base ${\cal I}:= (L_1, A_1,\ldots, L_n, A_n,
L_{n+1}\equiv Y)$.  We define the \emph{extended information base}
${\cal I}':= (L_1, U_1, A_1,\ldots, L_n, U_n, A_n, L_{n+1})$, with
$U_i$ denoting the variables in ${\cal U}$ realized just before action
$A_i$ is taken.  However, while thus allowing $U_i$ to influence $A_i$
in the observational regime, we still only consider interventional
strategies where there is no such influence---since the decision maker
does not have access to the $(U_i)$.  This motivates an extended
formal definition of ``control strategy'' in this context:

\begin{definition}[Control strategy]
  A regime $s$ is a {\em control strategy\/} if
  \label{cond:cont}
  \begin{equation}
    \label{eq:controlpar}
    \ind {A_i} {\overline{U}_i} {(\overline{L}_i, \overline{A}_{i-1}\,;\, s)} \quad (i = 1,\ldots, n)
  \end{equation}
  and in addition, the conditional distribution of $A_i$, given
  $(\overline L_{i}, \overline A_{i-1})$, under regime $s$, is known
  to the analyst.
\end{definition}
We again denote the set of interventional regimes corresponding to the
control strategies under consideration by ${\cal S}^*$.

\begin{definition}
  \label{def:es}
  We say that the problem exhibits {\em extended stability\/} (with
  respect to the extended information base ${\cal I}'$) if, for any
  $s\in{\cal S}^*$, with $\sigma$ denoting the non-random regime
  indicator taking values in $\{o,s\}$:
  \begin{equation}
    \label{eq:es}
    \ind {(L_i,U_i)} \sigma ({\overline{L}_{i-1}, \overline{U}_{i-1}, \overline{A}_{i-1}}) \quad (i =1, \ldots, n+1).    
  \end{equation}
\end{definition}

Extended stability is formally the same as simple stability, but using
a different information base, where $L_i$ is expanded to $(L_i, U_i)$.
The real difference is that the extended information base is not
available to the decision maker in the interventional regime, so that
his decisions can not take account of the $(U_i)$.  An ID faithfully
representing property \eqref{es} for $i = 1, 2, 3$ is shown in
\figref{extdiag}.\footnote{Note that the IDs in this paper differ from
  those in \citet{dawiddidelez}.}  The property \eqref{es} is
represented by the absence of arrows from $\sigma$ to $L_1$, $U_1$,
$L_2$, $U_2$ and $Y$.  However, the diagram does not explicitly
represent the additional property \eqref{controlpar}, which implies
that, {\em when $\sigma = s$\/}, the arrows into $A_1$ from $U_1$ and
into $A_2$ from $U_1$ and $U_2$ can be dropped.

\begin{figure}[h]
  \begin{center}
    \resizebox{2.8in}{!}{\includegraphics{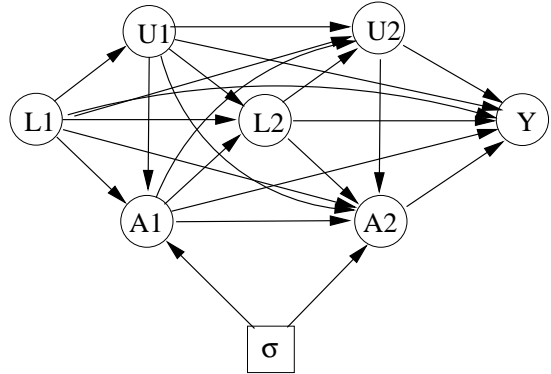}}
    \caption{Extended stability}
    \label{fig:extdiag}
  \end{center}
\end{figure}

To evade problems with events of zero probability, we can extend
\defref{positivity}:
\begin{definition}
  \label{def:extpositivity}
  We say the problem exhibits {\em extended positivity\/} if, for any
  $s \in \cS^*$, the joint distribution of $(\overline{U_n},
  \overline{L_n}, \overline{A_n}, Y)$ under $\pr_s$ is absolutely
  continuous with respect to that under $\pr_o$, \ie
  \begin{equation}
    \label{eq:extpos}
    \pr_{s}(E) > 0 \Rightarrow \pr_{o}(E)> 0
  \end{equation}
  for any event $E$ defined in terms of $(\overline{L_n},
  \overline{U_n}, \overline{A_n}, Y)$.
\end{definition}

\subsection{Sequential randomization}
\label{sec:sr}

Extended stability represents the belief that, for each $i$, the
conditional distribution of $(L_i,U_{i})$, given all the earlier
variables $(\overline L_{i-1}, \overline U_{i-1}, \overline A_{i-1})$
in the extended information base, is the same in the observational
regime as in the interventional regime.  This will typically be
defensible if we can argue that we have included in ${\cal L}\cup{\cal
  U}$ all the variables influencing the actions in the observational
regime.

However extended stability, while generally more defensible than
simple stability, typically does not imply simple stability, which is
what is required to support $G$-recursion.  But it may do so if we
impose additional conditions.  Here and in \secref{si} below we
explore two such conditions.

Our first is the following:

\begin{condition}[Sequential randomization]
  \label{con:pa1}
  \begin{equation}
    \label{eq:pa0}
    \ind {A_i} {\overline U_i} {(\overline L_i, \overline A_{i-1}\,;\, o)} \quad (i = 1, \ldots, n).
  \end{equation}
\end{condition}
Taking account of \eqref{controlpar}, we see that \eqref{pa0} is
equivalent to:
\begin{equation}
  \label{eq:pa1}
  \ind {A_i} {\overline{U}_i} {(\overline{L}_i, \overline{A}_{i-1}\,;\, \sigma)} \quad (i = 1,\ldots, n)
\end{equation}
where $\sigma$ takes values in ${\cal S} = \{o\} \cup {\cal S}^*$.

Under sequential randomization, the observational distribution of
$A_i$, given the earlier variables in the information base, would be
unaffected by further conditioning on the earlier unobservable
variables, ${\overline U_i}$.  Hence the $(U_i)$ are redundant for
explaining the way in which actions are determined in the
observational regime.  While this condition will hold under a control
strategy, in the observational regime it requires that the only
information that has been used to assign the treatment at each stage
is that supplied by the observable variables.  For example, sequential
randomization will hold if the actions are physically sequentially
randomized within all levels of the earlier variables in the
information base.  The following result is therefore unsurprising.
\begin{theorem}
  \label{thm:parner1}
  Suppose we have both extended stability, \eqref{es} and sequential
  randomization, \eqref{pa1}.  Then we have simple stability,
  \eqref{st}.
\end{theorem}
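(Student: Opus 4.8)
The plan is to derive simple stability, \eqref{st}, from extended stability, \eqref{es}, and sequential randomization, \eqref{pa1}, by working in the combined stochastic model on $(\Omega^*, {\cal A}^*)$ described in \secref{connex}, so that all five axioms P1--P5 of \thmref{axioms} are available and $\sigma$ may be freely manipulated as an ordinary random variable. Fix $i \in \{1, \ldots, n+1\}$ and $s \in {\cal S}^*$, with $\sigma$ ranging over $\{o, s\}$. The target is $\ind {L_i}{\sigma}{(\overline L_{i-1}, \overline A_{i-1})}$. Extended stability gives us $\ind {(L_j, U_j)}{\sigma}{(\overline L_{j-1}, \overline U_{j-1}, \overline A_{j-1})}$ for every $j \leq i$, and sequential randomization gives $\ind {A_j}{\overline U_j}{(\overline L_j, \overline A_{j-1}, \sigma)}$ for every $j \leq i-1$. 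The natural strategy is an induction on $j$ establishing the stronger statement
\begin{equation*}
  \ind {(\overline L_j, \overline U_j)}{\sigma}{(\overline L_{j-1}, \overline A_{j-1})}
\end{equation*}
together with the ``actions carry no extra $\sigma$-information beyond $\overline U$'' fact, and then to marginalize (via P3, Decomposition) down to $L_i$ alone at the end.

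First I would handle the base step: at $j=1$, \eqref{es} with $i=1$ reads $\ind {(L_1,U_1)}{\sigma}{\emptyset}$, which is exactly the claim. For the inductive step, suppose $\ind {(\overline L_{j-1}, \overline U_{j-1})}{\sigma}{(\overline L_{j-2}, \overline A_{j-2})}$; I want to absorb $A_{j-1}$ and then $(L_j, U_j)$ into the independence. To bring in $A_{j-1}$, use sequential randomization $\ind {A_{j-1}}{\overline U_{j-1}}{(\overline L_{j-1}, \overline A_{j-2}, \sigma)}$, which after Symmetry (P1) and Weak Union / Contraction (P4, P5) maneuvers lets us conclude $\ind {A_{j-1}}{\sigma}{(\overline L_{j-1}, \overline A_{j-2})}$ is compatible with the induction hypothesis in the sense that $(\overline L_{j-1}, \overline U_{j-1}, A_{j-1})$ is jointly independent of $\sigma$ given $(\overline L_{j-2}, \overline A_{j-2})$; concretely, combine the hypothesis with the randomization statement using Contraction to append $A_{j-1}$ to the left-hand set while keeping $\sigma$ on the right inert. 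Then apply extended stability \eqref{es} at stage $j$, namely $\ind {(L_j, U_j)}{\sigma}{(\overline L_{j-1}, \overline U_{j-1}, \overline A_{j-1})}$, and glue it to what we have via Contraction (P5) — the conditioning set of the \eqref{es} statement is precisely the left-hand variables accumulated so far together with the original conditioning set, so P5 applies cleanly — to obtain $\ind {(\overline L_j, \overline U_j)}{\sigma}{(\overline L_{j-1}, \overline A_{j-1})}$. This closes the induction.

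Running the induction up to $j = i$ yields $\ind {(\overline L_i, \overline U_i)}{\sigma}{(\overline L_{i-1}, \overline A_{i-1})}$, and then Decomposition (P3), discarding $\overline U_i$ and $\overline L_{i-1}$ from the left, delivers exactly \eqref{st}. Since this holds for every $i$ and every $s \in {\cal S}^*$, simple stability follows. The main obstacle I anticipate is bookkeeping the conditioning sets so that each invocation of Contraction is legitimate — in particular making sure that when we append $A_{j-1}$ via sequential randomization the variable $\sigma$ genuinely sits in the conditioning set in the ``extended'' sense of \defref{nstoc1}/\defref{nstoc3}, which is why it is cleanest to pass to the $P^*$ model of \secref{connex} at the outset and treat everything as ordinary stochastic conditional independence. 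A secondary point to be careful about: \eqref{pa1} as stated has $\sigma$ ranging over all of ${\cal S} = \{o\} \cup {\cal S}^*$, whereas simple stability is a two-regime statement for each fixed $s$; restricting \eqref{pa1} to $\{o, s\}$ is harmless (a sub-family of regimes inherits the extended CI), so this causes no difficulty but should be noted.
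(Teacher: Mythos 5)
Your overall strategy---pass to the product-space model of \secref{connex} so that P1--P5 apply to $\sigma$ as if it were stochastic, then run an induction that alternately absorbs the action variables (via sequential randomization) and the pairs $(L_j,U_j)$ (via extended stability), finishing with Decomposition---is sound, and is essentially the algebraic proof that the paper itself does not spell out but delegates to Theorem~6.1 of \citet{dawiddidelez}, contenting itself with reading \eqref{st} off the influence diagram of \figref{parner1} by $d$-separation. Your base case and your final Contraction with \eqref{es} at stage $j$ are correct, as is the closing Decomposition step.

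However, the middle of your inductive step asserts something false. You claim that combining the induction hypothesis with \eqref{pa1} yields $\ind{(\overline L_{j-1}, \overline U_{j-1}, A_{j-1})}{\sigma}{(\overline L_{j-2}, \overline A_{j-2})}$, i.e.\ that $A_{j-1}$ can be appended to the block that is independent of the regime indicator. By Weak Union and Decomposition this would entail $\ind{A_{j-1}}{\sigma}{(\overline L_{j-1}, \overline U_{j-1}, \overline A_{j-2})}$: that the treatment-assignment distribution given the history is the same in the observational and interventional regimes. That is not assumed and is generally false---the whole problem is nontrivial precisely because the two regimes assign actions differently. The correct manoeuvre keeps $A_{j-1}$ on the \emph{conditioning} side: take $X=\overline U_{j-1}$ in P5, combining $\ind{\overline U_{j-1}}{\sigma}{(\overline L_{j-1}, \overline A_{j-2})}$ (obtained from the induction hypothesis by P1, P4 and P3) with $\ind{\overline U_{j-1}}{A_{j-1}}{(\overline L_{j-1}, \overline A_{j-2}, \sigma)}$ (which is \eqref{pa1} after P1) to get $\ind{\overline U_{j-1}}{(\sigma, A_{j-1})}{(\overline L_{j-1}, \overline A_{j-2})}$, and then apply P4 to move $A_{j-1}$ into the conditioning set, giving $\ind{\overline U_{j-1}}{\sigma}{(\overline L_{j-1}, \overline A_{j-1})}$. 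From there your Contraction with $\ind{(L_j,U_j)}{\sigma}{(\overline L_{j-1}, \overline U_{j-1}, \overline A_{j-1})}$ goes through exactly as you describe and closes the induction. So the gap is local and repairable, but the step as you wrote it---``append $A_{j-1}$ to the left-hand set while keeping $\sigma$ on the right inert''---would fail.
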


An ID faithfully representing the conditional independence
relationships assumed in \thmref{parner1}, for $i=1,2,3$, is shown in
\figref{parner1}.
\begin{figure}[hbtp]
  \begin{center}
    \resizebox{2.8in}{!}{\includegraphics{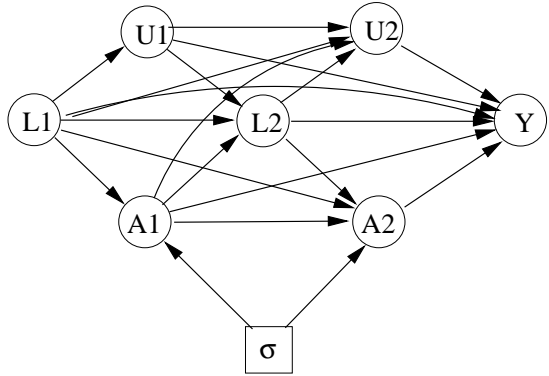}}
    \caption{Sequential randomization}
    \label{fig:parner1}
  \end{center}
\end{figure}
\figref{parner1} can be obtained from \figref{extdiag} on deleting the
arrows into $A_1$ from $U_1$ and into $A_2$ from $U_1$ and $U_2$, so
representing \eqref{pa1}.  (However, as we shall see below in
\secref{si}, in general such ``surgery'' on IDs can be hazardous.)

The conditional independence properties \eqref{st} characterising
simple stability can now be read off from \figref{parner1}, by
applying the $d$-separation or moralization criteria.  For a formal
algebraic proof of \thmref{parner1}, using just the axioms of
conditional independence as given in \theoremref{axioms}, see
Theorem~6.1 of \citet{dawiddidelez}.\footnote{Note that, in either of
  these approaches, we can restrict $\sigma$ to the two values $o$ and
  $s$, so fully justifying treating the non-stochastic variable
  $\sigma$ as if it were stochastic.}

\begin{corollary}
  \label{cor:es}
  Suppose we have extended stability, sequential randomization, and
  extended positivity.  Then we can apply $G$-recursion to compute the
  consequence of a strategy $s\in{\cal S}^*$.
\end{corollary}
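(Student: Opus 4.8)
The plan is to assemble three facts already in place: that extended stability together with sequential randomization yields simple stability; that extended positivity yields plain positivity; and that simple stability together with positivity licenses replacing the interventional conditional expectations in $G$-recursion by observational ones. First I would apply \thmref{parner1} directly: under the hypotheses of the corollary we obtain simple stability \eqref{st}, that is, $\ind{L_i}{\sigma}{(\overline L_{i-1}, \overline A_{i-1})}$ for each $i=1,\ldots,n+1$, with $\sigma$ taking values in $\{o,s\}$.

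Next I would observe that extended positivity implies positivity in the sense of \defref{positivity}. Any event $E$ defined in terms of $(\overline L_n, \overline A_n, Y)$ is in particular an event defined in terms of $(\overline U_n, \overline L_n, \overline A_n, Y)$, so \eqref{extpos} applied to such $E$ gives precisely \eqref{pos}; equivalently, absolute continuity passes to marginals. With both simple stability and positivity in force, the argument of \secref{positivity} applies unchanged: for every bounded measurable $h$ and every $i$, any version of $\E\{h(L_i)\mid \overline L_{i-1}, \overline A_{i-1}\,;\,o\}$ is also a version of $\E\{h(L_i)\mid \overline L_{i-1}, \overline A_{i-1}\,;\,s\}$, and, as remarked after \defref{simplestability}, this extends to conditional expectations of functions of the form $f(\overline L_i, \overline A_{i-1})$.

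It then follows that every ingredient of $G$-recursion is available from what the analyst knows or can estimate: the conditional distributions \itref{ai} of $A_i$ given $(\overline L_i, \overline A_{i-1})$ under $s$ are specified a priori by the control strategy and feed \eqref{mainrecura}, while the conditional expectations required in \eqref{mainrecurl} may, by the previous paragraph, be replaced by their observational counterparts, which are in principle estimable from observational data. Running the recursion of \secref{gr} --- starting from $f(\overline l_n, \overline a_n, y)=k(y)$ and alternating \eqref{mainrecurl} and \eqref{mainrecura} down to $f(\emptyset)$ --- therefore produces $\E\{k(Y)\,;\,s\}$, the consequence of $s$. I do not anticipate a serious obstacle, since the proof merely orchestrates results already established; the one point demanding care is the handling of $\pr_s$-null events, which is exactly why extended positivity is among the hypotheses and why the version-matching step of \secref{positivity} is needed rather than simple stability on its own.
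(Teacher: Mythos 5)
Your proposal is correct and follows essentially the same route the paper intends: \thmref{parner1} delivers simple stability, extended positivity restricts to positivity on the observable margin, and the version-matching argument of \secref{positivity} then licenses substituting observational conditional expectations into \eqref{mainrecurl}. Nothing further is needed.
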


\subsection{Sequential irrelevance}
\label{sec:si}

Consider now the following alternative condition:
\begin{condition}[Sequential Irrelevance]
  \label{con:pa1a}
  \begin{equation}
    \label{eq:pa1a}
    \ind {L_i} {\overline U_{i-1}} {(\overline L_{i-1}, \overline A_{i-1}\,;\, \sigma)} \quad (i = 1, \ldots, n+1).
  \end{equation}
\end{condition}

Under sequential irrelevance, in both regimes the conditional
distribution of the observable variable(s) at stage $i$ is unaffected
by the history of unobservable variables up to the previous stage
$i-1$, given the domain variables in the information base up to the
previous stage.  In contrast to \eqref{pa1}, \eqref{pa1a} permits the
unobserved variables that appear in earlier stages to influence the
next action $A_i$ (which can only happen in the observational
regime)---but not the development of the subsequent observable
variables (including the ultimate response variable $Y$).  This
condition will typically hold when at each stage $i$ the unobserved
variable $U_i$, while possibly influencing the next observable
variable $L_i$, does not affect the development of future $L$'s.  An
example might be where the unobservable variables represent the
inclination of the patient to take the treatment: in this case one
might expect $U_i$ to affect the development of $L_i$ but not
subsequent $L$'s.  In general, the validity of this assumption will
have to be justified in the context of the problem under study.

By analogy with the passage from \figref{extdiag} to \figref{parner1},
we might attempt to represent the additional assumption \eqref{pa1a}
by removing from \figref{extdiag} all arrows from $U_j$ to $L_i$
($j<i$).  This would yield \figref{parner1a}.
\begin{figure}[hb]
  \begin{center}
    \resizebox{2.8in}{!}{\includegraphics{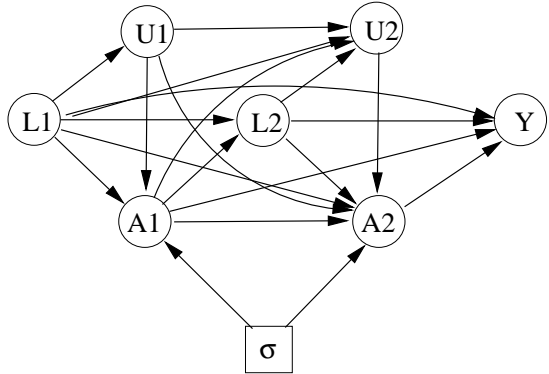}}
    \caption{Sequential irrelevance?}
    \label{fig:parner1a}
  \end{center}
\end{figure}
On applying $d$-separation or moralization to \figref{parner1a} we
could then deduce the simple stability property \eqref{st}.  However,
this approach is not valid, since \figref{parner1a} encodes the
property $\ind {L_2} \sigma {(L_1, A_1)}$, which can not be derived
from \eqref{es} and \eqref{pa1a} using only the ``axioms'' of
\thmref{axioms}.  In fact there is no ID that faithfully represents
the combination of the properties \eqref{es} and \eqref{pa1a}, since
these do not form a recursive system.  And indeed, in full generality,
simple stability is not implied by extended stability, \eqref{es},
together with sequential irrelevance, \eqref{pa1a}, as the following
counter-example demonstrates.

\begin{counterexample}
  \label{count:cts}
  Take $n=1$, ${\cal L} = \emptyset$ and ${\cal U} = \{U\}$.  The
  extended information base is ${\cal I}'=(U, A, Y)$.  We suppose
  that, in both the observational regime $o$ and the interventional
  regime $s$, $Y = 1$ if $A=U$, else $Y=0$.  Also, in each regime, the
  marginal distribution of $U$ is uniform on $[0,1]$.  It remains to
  specify the distribution of $A$, given $U$: we assume that, in
  regime $o$, $A=U$, while in regime $s$, $A$ is uniform on $[0,1]$,
  independently of $U$.

  It is readily seen that $\indo {U} {\sigma}$ and $\ind {Y} {\sigma}
  {(U,A)}$.  Thus we have extended stability, \eqref{es}, as
  represented by the ID of \figref{counter}.
  
  \begin{figure}[hb]
    \begin{center}
      \resizebox{1.2in}{!}{\includegraphics{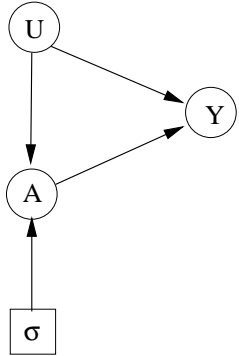}}
      \caption{Counter-example}
      \label{fig:counter}
    \end{center}
  \end{figure}

  Also, since $\indo{U}{A}$ in regime $s$, \eqref{controlpar} holds,
  so $s$ is a control strategy.  Finally, in regime $o$, $Y=1$ \aso,
  while in regime $s$, $Y=0$ \aso.  Because these are both degenerate
  distributions, trivially $\ind {Y} {U} {(A,\sigma)}$, and we have
  sequential irrelevance.  However, because they are different
  distributions, $\ninda {Y} {\sigma} {A}{}$: so we do {\em not\/}
  have simple stability, \eqref{st}.  In particular, we can not remove
  the arrow from $U$ to $Y$ in \figref{counter}, since this would
  encode the false property $\ind {Y} {\sigma} {A}$.  \qed
\end{counterexample}

So, if we wish to deduce simple stability from extended stability and
sequential irrelevance, further conditions, and a different approach,
will be required.

In Theorem~6.2 of \citet{dawiddidelez} it is shown that this result
does follow if we additionally impose the extended positivity
condition of \defref{extpositivity}; and then we need only require
sequential irrelevance, \eqref{pa1a}, to hold for the observational
regime $\sigma = o$.

However, in \secref{dc} below we show that, if we restrict attention
to discrete variables, no further conditions are required for this
result to hold.  In this case, we need only require sequential
irrelevance to hold for the interventional regime $\sigma = s$.

\section{Discrete case}
\label{sec:dc}

In this section we assume all variables are discrete, and denote
$P(\overline A=\overline a, \overline L = \overline l)$ by
$p(\overline a, \overline l)$, \etc

To control null events, we need the following lemma:
\begin{lemma}
  \label{lem:sidc}
  Let all variables be discrete.  Suppose that we have extended
  stability, \eqref{es}, and let $s$ be a control strategy, so that
  \eqref{controlpar} holds.  Then, for any $(\overline{u}_k,
  \overline{l}_k,\overline{a}_k)$ such that
  \begin{description}
  \item[$\bf A_k$:] $p(\overline{l}_k,\overline{a}_k\,;\,s)>0$, and
  \item[$\bf B_k$:]
    $p(\overline{u}_k,\overline{l}_k,\overline{a}_k\,;\,o) > 0 $, we
    have
  \item[$\bf C_k$:]
    $p(\overline{u}_k,\overline{l}_k,\overline{a}_k\,;\,s)>0$.
  \end{description}
\end{lemma}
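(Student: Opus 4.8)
I would prove this by induction on $k$, exploiting the factorization of the joint density in each regime together with the two stated hypotheses (extended stability, \eqref{es}, and the control-strategy property, \eqref{controlpar}). The key observation is that the joint density $p(\overline u_k, \overline l_k, \overline a_k\,;\,\cdot)$ factorizes, stage by stage, into ``observable/unobservable'' terms of the form $p(l_i, u_i \mid \overline l_{i-1}, \overline u_{i-1}, \overline a_{i-1}\,;\,\cdot)$ and ``action'' terms of the form $p(a_i \mid \overline l_i, \overline u_i, \overline a_{i-1}\,;\,\cdot)$. Extended stability tells us the first type of term agrees between $o$ and $s$ (on the relevant conditioning event), while the control-strategy property tells us that under $s$ the second type of term does not depend on $\overline u_i$, i.e.\ $p(a_i \mid \overline l_i, \overline u_i, \overline a_{i-1}\,;\,s) = p(a_i \mid \overline l_i, \overline a_{i-1}\,;\,s)$ whenever the conditioning event has positive $\pr_s$-probability.

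\textbf{The induction.} For the base case $k=0$ there is nothing to prove (the empty configuration has probability $1$ in both regimes). For the inductive step, assume the claim holds at stage $k-1$, and suppose $(\overline u_k, \overline l_k, \overline a_k)$ satisfies $\mathbf{A_k}$ and $\mathbf{B_k}$. First note that $\mathbf{A_k}$ implies $\mathbf{A_{k-1}}$ (marginalize over $l_k, a_k$), and $\mathbf{B_k}$ implies $\mathbf{B_{k-1}}$ likewise; hence by the inductive hypothesis we get $\mathbf{C_{k-1}}$, namely $p(\overline u_{k-1}, \overline l_{k-1}, \overline a_{k-1}\,;\,s)>0$. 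Now write
\begin{equation*}
  p(\overline u_k, \overline l_k, \overline a_k\,;\,s)
  = p(\overline u_{k-1}, \overline l_{k-1}, \overline a_{k-1}\,;\,s)\,
    \times\, p(l_k, u_k \mid \overline l_{k-1}, \overline u_{k-1}, \overline a_{k-1}\,;\,s)\,
    \times\, p(a_k \mid \overline l_k, \overline u_k, \overline a_{k-1}\,;\,s).
\end{equation*}
The first factor is positive by $\mathbf{C_{k-1}}$. For the second factor: since $\mathbf{C_{k-1}}$ holds, extended stability \eqref{es} (in its discrete form, as in the second item of the Remark following \defref{nstoc1}) gives $p(l_k, u_k \mid \overline l_{k-1}, \overline u_{k-1}, \overline a_{k-1}\,;\,s) = p(l_k, u_k \mid \overline l_{k-1}, \overline u_{k-1}, \overline a_{k-1}\,;\,o)$, and the right-hand side is positive because $\mathbf{B_k}$ together with $\mathbf{B_{k-1}}$ forces it to be (the conditional probability of an event of positive joint probability, given a conditioning event of positive probability). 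Hence the second factor is positive.

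\textbf{The main obstacle.} The delicate step — and the whole reason the lemma is needed — is the third factor, $p(a_k \mid \overline l_k, \overline u_k, \overline a_{k-1}\,;\,s)$. We must show this is positive. Here I would invoke \eqref{controlpar}: the conditioning event $(\overline l_k, \overline u_k, \overline a_{k-1})$ has positive $\pr_s$-probability (this follows once we know the first two factors are positive, which we have just established, so that $p(\overline u_k, \overline l_k, \overline a_{k-1}\,;\,s)>0$), so \eqref{controlpar} applies and yields $p(a_k \mid \overline l_k, \overline u_k, \overline a_{k-1}\,;\,s) = p(a_k \mid \overline l_k, \overline a_{k-1}\,;\,s)$. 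It remains to check the latter is positive, and this is exactly where hypothesis $\mathbf{A_k}$ enters: $p(\overline l_k, \overline a_k\,;\,s)>0$ together with $p(\overline l_k, \overline a_{k-1}\,;\,s)>0$ (which follows by marginalization from the positivity of the first two factors, or directly from $\mathbf{A_k}$) gives $p(a_k \mid \overline l_k, \overline a_{k-1}\,;\,s)>0$. Multiplying the three positive factors yields $\mathbf{C_k}$, completing the induction. The one subtlety to be careful about is the bookkeeping of \emph{which} conditioning events are known to have positive probability at each point, so that the discrete conditional-independence identities may legitimately be applied; everything else is routine.
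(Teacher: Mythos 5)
Your proof is correct and follows essentially the same route as the paper's: an induction on $k$ in which the control-strategy property \eqref{controlpar} is used to decouple $a_k$ from $\overline u_k$ under $s$, extended stability \eqref{es} is used to transfer the $(l_k,u_k)$ conditional to the observational regime where $\mathbf{B_k}$ guarantees positivity, and $\mathbf{A_k}$ supplies positivity of the action term. The only difference is cosmetic: you factorize the joint in temporal order, whereas the paper starts from $p(\overline u_k\mid\overline l_k,\overline a_k\,;\,s)\,p(\overline l_k,\overline a_k\,;\,s)$ and rearranges, but the ingredients and the positivity bookkeeping are identical.
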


\begin{proof}
  Let $H_{k}$ denote the assertion that {$\bf A_k$} and {$\bf B_k$}
  imply {$\bf C_k$}.  We establish $H_k$ by induction.

  To start, we note that $H_0$ holds vacuously.

  Now suppose $H_{k-1}$ holds.  Assume further {$\bf A_k$} and {$\bf
    B_k$}.  Together these conditions imply that all terms appearing
  throughout the following argument are positive.

  We have
  \begin{eqnarray}
    p(\overline{u}_{k}, \overline{l}_{k}, \overline{a}_{k}\,;\,s)
    &=& p(\overline{u}_{k}\mid
    \overline{l}_{k},\overline{a}_{k}\,;\,s)\,p(\overline{l}_{k},\overline{a}_{k}\,;\,s)\nonumber%\label{eq:sil6}
    \\
    &=& p(\overline{u}_{k}\mid
    \overline{l}_{k},\overline{a}_{k-1}\,;\,s)\,
    p(\overline{l}_{k},\overline{a}_{k}\,;\,s)
    \label{eq:sil7}
    \\
    &=& \frac{p(\overline{u}_{k},\overline{l}_{k},\overline{a}_{k-1}\,;\,s)}{p(\overline{l}_{k},\overline{a}_{k-1}\,;\,s)}
    p(\overline{l}_{k},\overline{a}_{k}\,;\,s) \nonumber
    \\
    &=& p(u_{k},l_{k}\mid
    \overline{u}_{k-1},\overline{l}_{k-1},\overline{a}_{k-1}\,;\,s)
    \nonumber
    \\
    &&{}    \times\,\frac{p(\overline{u}_{k-1},\overline{l}_{k-1},\overline{a}_{k-1}\,;\,s)\,p(\overline{l}_{k},\overline{a}_{k}\,;\,s)}{p(\overline{l}_{k},\overline{a}_{k-1}\,;\,s)}\nonumber
    % \label{eq:sil8}
    \\
    &=& p(u_{k},l_{k}\mid
    \overline{u}_{k-1},\overline{l}_{k-1},\overline{a}_{k-1}\,;\,o)
    \nonumber
    \\
    &&{}\times\,\frac{p(\overline{u}_{k-1},\overline{l}_{k-1},\overline{a}_{k-1}\,;\,s)\,p(\overline{l}_{k},\overline{a}_{k}\,;\,s)}{p(\overline{l}_{k},\overline{a}_{k-1}\,;\,s)}
    \label{eq:sil9}
    \\
    &=& \frac{p(\overline{u}_{k},\overline{l}_{k},\overline{a}_{k-1}\,;\,o)}{p(\overline{u}_{k-1},\overline{l}_{k-1},\overline{a}_{k-1}\,;\,o)}
    \nonumber
    \\
    &&{}\times    \,\frac{p(\overline{u}_{k-1},\overline{l}_{k-1},\overline{a}_{k-1}\,;\,s)\,p(\overline{l}_{k},\overline{a}_{k}\,;\,s)}{p(\overline{l}_{k},\overline{a}_{k-1}\,;\,s)}\nonumber
    % \label{eq:sil10}
    \\
    &>& 0.    \nonumber
  \end{eqnarray}
  Here \eqref{sil7} holds by \eqref{controlpar} and \eqref{sil9} holds
  by \eqref{es}.  The induction is established.  \qed\end{proof}

\begin{theorem}
  \label{thm:sidc}
  Suppose the conditions of \lemref{sidc} apply, and, further, that we
  have sequential irrelevance in the interventional regime $s$:
  \begin{equation}
    \label{eq:sis}
    \ind {L_i} {\overline U_{i-1}} {(\overline L_{i-1}, \overline A_{i-1}\,;\, s)} 
    \quad (i = 1, \ldots, n+1).
  \end{equation} 
  % {\em or\/} in the observational regime $o$:
  % \begin{equation}
  %   \label{eq:sio}
  %   \ind {L_i} {\overline U_{i-1}} {(\overline L_{i-1}, \overline A_{i-1}\,;\, 0)} 
  %   \quad (i = 1, \ldots, n+1),
  % \end{equation} 

  Then the simple stability property \eqref{st} holds.
\end{theorem}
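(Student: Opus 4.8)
The plan is to verify the discrete characterisation of \eqref{st} given in the Remark following \defref{nstoc1}: for each $i$ it suffices to produce a single function $w_i(l_i,\overline l_{i-1},\overline a_{i-1})$ with $p(l_i\mid\overline l_{i-1},\overline a_{i-1}\,;\,\sigma)=w_i(l_i,\overline l_{i-1},\overline a_{i-1})$ for $\sigma\in\{o,s\}$ whenever $p(\overline l_{i-1},\overline a_{i-1}\,;\,\sigma)>0$. Take $w_i$ to be the observational conditional at every history non-null under $\pr_o$, the interventional conditional at every history null under $\pr_o$ but non-null under $\pr_s$, and $0$ elsewhere; this is well defined, and hence \eqref{st} follows, provided only that
\begin{equation*}
  p(l_i\mid\overline l_{i-1},\overline a_{i-1}\,;\,o)=p(l_i\mid\overline l_{i-1},\overline a_{i-1}\,;\,s)
\end{equation*}
at every history $(\overline l_{i-1},\overline a_{i-1})$ with $p(\overline l_{i-1},\overline a_{i-1}\,;\,o)>0$ and $p(\overline l_{i-1},\overline a_{i-1}\,;\,s)>0$. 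So the theorem reduces to this single equality.

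To establish it, fix such a history and sum the observational conditional over the unobserved past:
\begin{equation*}
  p(l_i\mid\overline l_{i-1},\overline a_{i-1}\,;\,o)=\sum_{\overline u_{i-1}}p(l_i\mid\overline u_{i-1},\overline l_{i-1},\overline a_{i-1}\,;\,o)\;p(\overline u_{i-1}\mid\overline l_{i-1},\overline a_{i-1}\,;\,o),
\end{equation*}
the sum effectively ranging over those $\overline u_{i-1}$ with $p(\overline u_{i-1},\overline l_{i-1},\overline a_{i-1}\,;\,o)>0$. For each such $\overline u_{i-1}$, conditions ${\bf A}_{i-1}$ and ${\bf B}_{i-1}$ of \lemref{sidc} hold (the former because the fixed history is non-null under $\pr_s$), so ${\bf C}_{i-1}$ supplies $p(\overline u_{i-1},\overline l_{i-1},\overline a_{i-1}\,;\,s)>0$ as well. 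Now apply, in turn: decomposition (property P3 of \thmref{axioms}) to extended stability \eqref{es}, which gives $\ind{L_i}{\sigma}{(\overline L_{i-1},\overline U_{i-1},\overline A_{i-1})}$ and hence $p(l_i\mid\overline u_{i-1},\overline l_{i-1},\overline a_{i-1}\,;\,o)=p(l_i\mid\overline u_{i-1},\overline l_{i-1},\overline a_{i-1}\,;\,s)$ (both conditioning events being non-null); and then sequential irrelevance in regime $s$, \eqref{sis}, which rewrites the right-hand side as $p(l_i\mid\overline l_{i-1},\overline a_{i-1}\,;\,s)$, a value free of $\overline u_{i-1}$. Substituting back and using $\sum_{\overline u_{i-1}}p(\overline u_{i-1}\mid\overline l_{i-1},\overline a_{i-1}\,;\,o)=1$ yields the required equality. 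The degenerate cases ($i=1$, or empty conditioning histories) are immediate, extended stability for $i=1$ being just equality of the laws of $(L_1,U_1)$ across regimes.

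I expect the only real difficulty to be the null-event bookkeeping rather than any algebra: each conditional-independence relation may legitimately be invoked only at a conditioning event that carries positive probability in the regime concerned, and the non-trivial step that lets us upgrade ``non-null under $\pr_o$'' to ``non-null under $\pr_s$'' is exactly \lemref{sidc}. That lemma's inductive proof is where discreteness is genuinely used --- it rearranges quotients such as $p(\overline u_k,\overline l_k,\overline a_{k-1}\,;\,o)/p(\overline u_{k-1},\overline l_{k-1},\overline a_{k-1}\,;\,o)$ that have no counterpart in the presence of continuous variables --- and it is the absence of any such device that underlies \countref{cts}.
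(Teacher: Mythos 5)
Your proposal is correct and follows essentially the same route as the paper's own proof: reduce to showing the two conditionals agree on histories non-null under both regimes, expand over $\overline u_{i-1}$ with positive observational probability, invoke \lemref{sidc} to upgrade these to positive interventional probability, then apply extended stability followed by sequential irrelevance in regime $s$ and sum out $\overline u_{i-1}$. The only cosmetic difference is that you construct the common version $w_i$ piecewise, where the paper simply takes $w:=p(l_i\mid\overline l_{i-1},\overline a_{i-1}\,;\,s)$ and dismisses the histories null under either regime as trivial.
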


\begin{proof}
  The result will be established if we can show that, for any $l_{i}$,
  we can find a function $w(\overline L_{i-1}, \overline A_{i-1})$
  such that, for both $\sigma = o$ and $\sigma = s$,
  \begin{equation*}
    p(l_{i} \mid \overline{l}_{i-1},\overline{a}_{i-1}\,;\,\sigma)=w(\overline{l}_{i-1},\overline{a}_{i-1})
  \end{equation*} 
  whenever $p(\overline{l}_{i-1},\overline{a}_{i-1}\,;\,\sigma)>0.$

  This is trivially possible if either regime gives probability $0$ to
  $(\overline{l}_{i-1},\overline{a}_{i-1})$.  So suppose
  $p(\overline{l}_{i-1},\overline{a}_{i-1}\,;\,\sigma)>0$ for both
  regimes.  Then
  \begin{equation}
    p(l_{i} \mid \overline{l}_{i-1},\overline{a}_{i-1}\,;\,o)
    =\sum_{\overline{u}_{i-1}}{}^{'}p(l_{i} \mid
    \overline{u}_{i-1}, \overline{l}_{i-1},\overline{a}_{i-1}\,;\,o)\,
    \times\, p(\overline{u}_{i-1} \mid
    \overline{l}_{i-1},\overline{a}_{i-1}\,;\,o)
    \label{eq:sit1}
  \end{equation}
  where $\sum'$ denotes summation restricted to terms for which
  $p(\overline{u}_{i-1},\overline{l}_{i-1},\overline{a}_{i-1}\,;\,o) >
  0$---and so, by \lemref{sidc},
  $p(\overline{u}_{i-1},\overline{l}_{i-1},\overline{a}_{i-1}\,;\,s)>0$.
  Then by \eqref{es},
  \begin{eqnarray}
    p(l_{i} \mid \overline{l}_{i-1},\overline{a}_{i-1}\,;\,o)
    &=&\sum_{\overline{u}_{i-1}}{}^{'}p(l_{i} \mid \overline{u}_{i-1},\overline{l}_{i-1},\overline{a}_{i-1}\,;\,s)\, \times\,p(\overline{u}_{i-1} \mid \overline{l}_{i-1},\overline{a}_{i-1}\,;\,o) \nonumber \\
    &=&\sum_{\overline{u}_{i-1}}{}^{'}p(l_{i} \mid \overline{l}_{i-1},\overline{a}_{i-1}\,;\,s)\,\times\, p(\overline{u}_{i-1} \mid \overline{l}_{i-1},\overline{a}_{i-1}\,;\,o) \label{eq:sit2}\\
    &=&p(l_{i} \mid \overline{l}_{i-1},\overline{a}_{i-1}\,;\,s)
    \nonumber
  \end{eqnarray}
  where \eqref{sit2} holds by \eqref{sis}.  Thus we can take
  \begin{equation*}
    w(\overline{l}_{i-1},\overline{a}_{i-1}):=p({l}_{i} \mid \overline{l}_{i-1},\overline{a}_{i-1}\,;\,s)
  \end{equation*} 
  to conclude the proof.  \qed\end{proof}

\Countref{discretesi} in the Appendix demonstrates that, even in this
discrete case, to deduce simple stability under the conditions of
\lemref{sidc} it is not sufficient to impose sequential irrelevance
only for the observational regime $o$.

\section{Conclusion}
\label{sec:summary}

The decision-theoretic approach to causal inference focuses on the
possibilities for transferring probabilistic information between
different stochastic regimes.  In this paper we have developed a
formal underpinning for this approach, based on an extension of the
axiomatic theory of conditional independence to include non-stochastic
variables.  This formal foundation now supplies a rigorous
justification for various more informal arguments that have previously
been presented \citep{Dawid1979,Dawid2002,dawiddidelez}.

In applying this theory to the problem of dynamic treatment
assignment, we have shown how, and under what conditions, the
assumptions of sequential randomization or sequential irrelevance can
support observational identification of the consequence of some
treatment strategy under consideration.  This is straightforward for
sequential randomization, but somewhat less so for sequential
irrelevance, where in general additional positivity conditions are
required; however, we have shown that these may be dispensed with when
all variables are discrete.

% BibTeX users please use one of
% \bibliographystyle{spbasic} % basic style, author-year citations
% \bibliographystyle{spmpsci} % mathematics and physical sciences
% \bibliographystyle{spphys} % APS-like style for physics
% \bibliography{ignorabilitybib} % name your BibTeX data base

\begin{thebibliography}{17}
  \providecommand{\natexlab}[1]{#1} \providecommand{\url}[1]{{#1}}
  \providecommand{\urlprefix}{URL } \expandafter\ifx\csname
  urlstyle\endcsname\relax
  \providecommand{\doi}[1]{DOI~\discretionary{}{}{}#1}\else
  \providecommand{\doi}{DOI~\discretionary{}{}{}\begingroup
    \urlstyle{rm}\Url}\fi \providecommand{\eprint}[2][]{\url{#2}}

\bibitem[{Constantinou(2013)}]{nayia:thesis}Constantinou P (2013)
  Conditional Independence and Applications in Statistical Causality.
  PhD Dissertation, University of Cambridge


\bibitem[{Cowell et~al(2007)Cowell, Dawid, Lauritzen, and
    Spiegelhalter}]{cowell} Cowell RG, Dawid AP, Lauritzen SL,
  Spiegelhalter DJ (2007) Probabilistic Networks and Expert Systems:
  Exact Computational Methods for Bayesian Networks.  Springer
  Publishing Company

\bibitem[{Dawid(1979{\natexlab{a}})}]{Dawid1979} Dawid AP
  (1979{\natexlab{a}}) Conditional independence in statistical theory.
  Journal of the Royal Statistical
  Society Series B (Methodological) 41(1):1--31, \urlprefix \\
  \url{http://www.jstor.org/stable/2984718}

\bibitem[{Dawid(1979{\natexlab{b}})}]{apd:misl} Dawid AP
  (1979{\natexlab{b}}) Some misleading arguments involving conditional
  independence.  Journal of the Royal Statistical Society Series B
  (Methodological) 41(2):249--52, \urlprefix \\
  \url{http://www.jstor.org/stable/2985039}

\bibitem[{Dawid(1980)}]{Dawid1980} Dawid AP (1980) Conditional
  independence for statistical operations.  The Annals of Statistics
  8(3):598--617, \urlprefix \\
  \url{http://www.jstor.org/stable/2240595}

\bibitem[{Dawid(1998)}]{Dawid1998} Dawid AP (1998) Conditional
  independence.  In: Kotz S, Read CB, Banks DL (eds) Encyclopedia of
  Statistical Sciences, Update Volume~2, John Wiley \& Sons, Inc., pp.~146--155, \urlprefix \\
  \url{http://dx.doi.org/10.1002/0471667196.ess0618.pub2}

\bibitem[{Dawid(2001)}]{Dawid2001i} Dawid AP (2001) Separoids: A
  mathematical framework for conditional independence and irrelevance.
  Annals of Mathematics and Artificial Intelligence
  32:335--372, \urlprefix \\
  \url{http://dx.doi.org/10.1023/A:1016734104787}

\bibitem[{Dawid(2002)}]{Dawid2002} Dawid AP (2002) Influence diagrams
  for causal modelling and inference.  International Statistical
  Review 70(2):161--189, \doi{10.1111/j.1751-5823.2002.tb00354.x},
  \urlprefix \\
  \url{http://dx.doi.org/10.1111/j.1751-5823.2002.tb00354.x}

\bibitem[{Dawid and Didelez(2010)}]{dawiddidelez} Dawid AP, Didelez V
  (2010) Identifying the consequences of dynamic treatment strategies:
  A decision-theoretic overview.  Statistics Surveys 4:184--231,
  \doi{10.1214/10-SS081}, \urlprefix \\ \url{arXiv:1010.3425}

\bibitem[{Pearl(1988)}]{pearl1988} Pearl J (1988) Probabilistic
  Reasoning in Intelligent Systems: Networks of Plausible Inference.
  Morgan Kaufmann Publishers Inc., San Francisco, CA, USA

\bibitem[{Lauritzen et~al(1990)Lauritzen, Dawid, Larsen, and
    Leimer}]{lauritzen} Lauritzen SL, Dawid AP, Larsen BN, Leimer HG
  (1990) Independence properties of directed {M}arkov fields.
  Networks
  20(5):491--505, \doi{10.1002/net.3230200503}, \urlprefix \\
  \url{http://dx.doi.org/10.1002/net.3230200503}

\bibitem[{Murphy(2003)}]{Murphy} Murphy SA (2003) {Optimal dynamic
    treatment regimes}.  Journal of the Royal Statistical Society:
  Series B (Statistical Methodology) 65(2):331--355,
  \doi{10.1111/1467-9868.00389}, \urlprefix \\
  \url{http://dx.doi.org/10.1111/1467-9868.00389}

\bibitem[{Robins(1986)}]{Robins1986} Robins J (1986) A new approach to
  causal inference in mortality studies with a sustained exposure
  period---{A}pplication to control of the healthy worker survivor
  effect.  Mathematical Modelling 7:1393--1512,
  \doi{10.1016/0270-0255(86)90088-6}, \urlprefix \\
  \url{http://www.sciencedirect.com/science/article/pii/0270025586900886}

\bibitem[{Robins(1987)}]{Robins1987} Robins J (1987) Addendum to ``{A}
  new approach to causal inference in mortality studies with sustained
  exposure periods---{A}pplication to control of the healthy worker
  survivor effect.''\@ Computers \& Mathematics with Applications
  14:923--945

\bibitem[{Robins(1989)}]{Robins1989} Robins J (1989) The analysis of
  randomized and nonrandomized {AIDS} treatment trials using a new
  approach to causal inference in longitudinal studies.  In:
  L~Sechrest HF, Mulley A (eds) Health Service Research Methodology: A
  Focus on AIDS, NCSHR, U.S.\ Public Health Service, pp.~113--159

\bibitem[{Robins(1992)}]{Robins1992} Robins J (1992) Estimation of the
  time-dependent accelerated failure time model in the presence of
  confounding factors.  Biometrika 79:321--324

\bibitem[{Robins(1997)}]{Robins1997} Robins J (1997) Causal inference
  from complex longitudinal data.  In: Berkane M (ed) Latent Variable
  Modeling and Applications to Causality, Springer-Verlag, New York,
  Lecture Notes in Statistics, vol 120, pp.~69--117

\bibitem[{Rosth{\o}j et~al(2006)Rosth{\o}j, Fullwood, Henderson, and
    Stewart}]{henderson} Rosth{\o}j S, Fullwood C, Henderson R,
  Stewart S (2006) Estimation of optimal dynamic anticoagulation
  regimes from observational data: A regret-based approach.
  Statistics
  in Medicine 25(24):4197--4215, \doi{10.1002/sim.2694}, \urlprefix \\
  \url{http://dx.doi.org/10.1002/sim.2694}

\bibitem[{Sterne et~al(2009)Sterne, May, Costagliola, de~Wolf,
    Phillips, Harris, Funk, Geskus, Gill, Dabis, Miro, Justice,
    Ledergerber, Fatkenheuer, Hogg, D'Arminio-Monforte, Saag, Smith,
    Staszewski, Egger, and Cole}]{Sterne} Sterne JAC, May M,
  Costagliola D, de~Wolf F, Phillips AN, Harris R, Funk MJ, Geskus RB,
  Gill J, Dabis F, Miro JM, Justice AC, Ledergerber B, Fatkenheuer G,
  Hogg RS, D'Arminio-Monforte A, Saag M, Smith C, Staszewski S, Egger
  M, Cole SR (2009) Timing of initiation of antiretroviral therapy in
  {AIDS}-free {HIV}-1-infected patients: A collaborative analysis of
  18 {HIV} cohort studies.   The Lancet 373:1352--1363, \urlprefix \\
  \url{http://linkinghub.elsevier.com/retrieve/pii/S0140673609606127}
\end{thebibliography}

% Non-BibTeX users please use
% \begin{thebibliography}{}
%
%   and use \bibitem to create references.  Consult the Instructions
%   for authors for reference list style.
%
% \bibitem{RefJ} Format for Journal Reference Author, Article title,
%   Journal, Volume, page numbers (year) Format for books
% \bibitem{RefB} Author, Book title, page numbers.  Publisher, place
%   (year) etc
% \end{thebibliography}

\pagebreak
\appendix

\section{Appendix: The need for positivity}

% \subsection{Stability}
% \label{sec:counterexample}

\begin{counterexample}
  \label{count:appb}
  The following counter-example illustrates what can go wrong when we
  do not have positivity: even when a property such as
  \eqref{nonstocci} holds, we can not use just any version of the
  conditional expectation in one regime to serve as a version of this
  conditional expectation in another regime.

  Consider a sequential decision problem of $n=2$ stages with domain
  variables $L_1$, $A$ and $L_2$, where $A$ is a binary variable with
  $A=0$ denoting no treatment and $A=1$ denoting treatment.  In the
  observational regime $o$, the treatment is never given:
  $\pr_{o}(A=0)=1$; while in the interventional regime $s$, the
  treatment is always given: $\pr_{s}(A=1)=1$.  We thus have failure
  of the positivity requirement of \defref{positivity}.

  Suppose that,in both regimes, $L_1= 0$ or $1$ each with probability
  $1/2$, and $L_2 = L_1 + A$.  Then, with $\sigma$ denoting the regime
  indicator taking values in $\cS = \{o, s\}$, we trivially have $\ind
  {L_{2}}{\sigma}{(L_{1}, A)}$.

  Now consider the variables
  \begin{equation*}
    W_{o}
    = \left\{
      \begin{array}{rl}
        L_1 & \text{if } A=0 \\
        0 & \text{if } A=1
      \end{array} \right.
  \end{equation*}
  and
  \begin{equation*}
    W_{s} = \left\{
      \begin{array}{rl}
        2 & \text{if } A=0 \\
        L_1+1 & \text{if } A=1.
      \end{array} \right.
  \end{equation*}
  
  Then $W_{o}=L_2\,\, \as{\pr_0}$, so $W_o$ serves as a version of
  $\E(L_2\cd L_1,A\,;\,o)$; also $W_{s}=L_2 \,\,\as{\pr_s}$, so $W_s$
  serves as a version of $\E(L_2\cd L_1,A\,;\,s)$.  However, almost
  surely under both $\pr_o$ and $\pr_s$, $W_{o} \neq W_{s}$, and
  neither of these variables supplies a version of $\E(L_2\cd L_1,A)$
  simultaneously valid in both regimes.  \qed
\end{counterexample}

% \subsection{Sequential Irrelevance only for the observational
% regime}
% \label{sec:condirrcounterex}

\begin{counterexample}
  \label{count:discretesi}
  In \secref{dc} we have seen that, when all random variables are
  discrete and the conditions of \lemref{sidc} are satisfied, in order
  to be able to deduce simple stability it is sufficient to require
  sequential irrelevance only for the interventional regime.  However,
  without the positivity assumption simple stability does not follow
  if, additionally to the requirements of \lemref{sidc}, we instead
  require sequential irrelevance only for the observational regime.

  Consider a sequential decision problem of $n=2$ stages with extended
  information base ${\cal I}':= (U, A,Y)$.  The joint distributions of
  the variables in ${\cal I}'$ in the two regimes $\sigma= 0$ and
  $\sigma=s$ are supposed given by \tabref{nonlin}, where the
  probabilities are to be taken over $1500$ (\eg,
  $P(U=0,A=1,Y=0\,;\,s)= {252}/{1500}$).

  This problem does not exhibit extended positivity, since
  $P(U=1,A=1;\sigma=o)=0$; that is, in the observational regime, $U =
  1 \Rightarrow A = 0$.  Such a case might occur if, for example, $U$
  represents a patient's history of an allergic reaction to the
  treatment, which the doctor under observation knows about and takes
  into account, deciding that presence of the allergy should always
  preclude prescribing the treatment.  However the allergy information
  is not available to the decision-maker operating strategy $s$.

  The reader may check that extended stability, \eqref{es}, holds,
  \viz\ $\ind Y \sigma {U,A}$, and that $s$ is a control strategy:
  \eqref{controlpar} holds, \viz\ $\ind A U {\sigma=s}$.  Also,
  sequential irrelevance, \eqref{pa1a}, holds for the observational
  regime, \viz\ $\ind Y U {A; \sigma=s}$, though not the
  interventional regime, since $\nind Y U {A=1;\sigma=s}$.  And now
  simple stability, \eqref{st}, does not hold, since $\nind Y \sigma
  {A=1}$.  \qed

  \begin{table}[h]
    \centering 
    \begin{tabular}{c |rr } 
      & \multicolumn{1}{c}{$\sigma=o$} & \multicolumn{1}{c}{$\sigma=s$} \\ [0.5ex] % inserts table %heading
      \hline 
      $P(U=0,A=0,Y=0)$ & 135 & 180 \\ 
      $P(U=0,A=0,Y=1)$ & 240 & 320\\
      $P(U=0,A=1,Y=0)$ & 50 & 25 \\ 
      $P(U=0,A=1,Y=1)$ & 200 & 100 \\ 
      $P(U=1,A=0,Y=0)$ & 315 & 252 \\
      $P(U=1,A=0,Y=1)$ & 560 & 448 \\
      $P(U=1,A=1,Y=0)$ & 0 & 98 \\
      $P(U=1,A=1,Y=1)$ & 0 & 77\\%[1ex] %
      \hline %inserts single line
    \end{tabular}
    \caption{Sequential irrelevance in the observational regime} 
    \label{tab:nonlin} % is used to refer this table in the text
  \end{table}
\end{counterexample}
\end{document}